\documentclass[leqno,11pt, a4]{amsart}
\topmargin -1.2cm \evensidemargin 0cm \oddsidemargin 0cm \textwidth 16cm \textheight 22cm
\usepackage[utf8]{inputenc}
\usepackage{amsthm}
\usepackage{amsmath}
\usepackage{enumitem}
\usepackage{amsfonts}
\usepackage{amssymb}
\usepackage{graphicx}
\usepackage{pdfsync}
\linespread{1.5}

\newtheorem{theorem}{Theorem}[section]
\newtheorem{definition}{Definition} [section]
\newtheorem{corollary}{Corollary}[theorem]
\newtheorem{lemma}[theorem]{Lemma}
\newtheorem{proposition}[theorem]{Proposition}

\newcounter{yuppo}
\setcounter{yuppo}{1}
\newtheorem{yuppi}{Theorem}

\newcommand{\Keler} {K\"{a}hler }

\newcommand{\End}{\operatorname{End}}

\newcommand{\cd}{\cdot}

\newcommand{\om}{\omega}

\renewcommand{\phi}{\varphi}
\newcommand{\cinf}{C^\infty}

\newcommand{\ra}{\rightarrow}

\newcommand{\C}{\mathbb{C}}
\newcommand{\R}{\mathbb{R}}

\newcommand{\Gl}{\operatorname{Gl}}

\newcommand{\ad}{{\operatorname{ad}}}

\newcommand{\ga}{\gamma}

\newcommand{\enf}{\emph}

\newcommand{\liu}{\mathfrak{u}}

\newcommand{\lia}{\mathfrak{a}}

\newcommand{\lier}{\mathfrak{r}}

\newcommand{\lieg}{\mathfrak{g}}

\newcommand{\liep}{\mathfrak{p}}

\newcommand{\la}{\lambda}

\newcommand{\alfa}{\alpha}
 \newcommand{\vacuo}{\emptyset}
\newcommand{\OO}{\mathcal{O}} 
\newcommand{\ext}{\operatorname{ext}} 
\newcommand{\sx}{\langle} 
\newcommand{\xs}{\rangle}
\newcommand{\relint}{\operatorname{relint}} 

\newcommand{\Crit}{\operatorname{Crit}} 

%



\newcommand{\noparty}[1]{}
\newcommand{\changed}[1]{{#1}}

\newcommand{\scalo}{\sx \, , \, \xs}

\newcommand{\metrica}{(\, , \, )}

\newcommand{\mup}{\mu_\liep}
\newcommand{\mupb}{\mu_\liep^\beta}

\title{Compact orbits of Parabolic subgroups}
\author{Biliotti, L.}
\address{Leonardo Biliotti, Dipartimento di Scienze Matematiche, Fisiche e Informatiche \\
          Universit\`a di Parma (Italy)}
\email{leonardo.biliotti@unipr.it}
\author{Windare, O.J.}
\address{Oluwagbenga Joshua Windare, Dipartimento di Scienze Matematiche, Fisiche e Informatiche \\
          Universit\`a di Parma (Italy)}
\email{oluwagbengajoshua.windare@unipr.it}
\keywords{Cartan decomposition, Hamiltonian action, Parabolic subgroups, Momentum map}
\thanks{The first author was partially supported by the Project PRIN 2015, ``Real and Complex Manifolds: Geometry, Topology and Harmonic Analysis'',
Project PRIN  2017 ``Real and Complex Manifolds: Topology, Geometry and holomorphic dynamics'' and by GNSAGA INdAM}

%
\subjclass[2010]{57S20; 32M05}
\begin{document}
\maketitle
\begin{abstract}
\noindent
We study the action of a real reductive group $G$ on a real submanifold $X$ of a \Keler manifold $Z$.
We suppose that the action of a compact connected Lie group $U$ with Lie algebra $\mathfrak{u}$ extends holomorphically to an action of the complexified group
$U^\C$ and that the $U$-action on $Z$ is Hamiltonian. If $G\subset U^\C$ is compatible there exists a gradient map
$\mu_{\mathfrak p}:X \longrightarrow \mathfrak p$ where $\mathfrak g=\mathfrak k \oplus \mathfrak p$ is a Cartan decomposition of $\mathfrak g$. In this paper we describe compact orbits of parabolic subgroups of $G$ in term of the gradient map $\mu_\mathfrak p$.
\end{abstract}
\section{Introduction}
\pagenumbering{arabic}
In this paper, we study the actions of real reductive groups on real submanifolds of \Keler manifolds.

Let $U$ be a compact connected Lie group with Lie algebra $\mathfrak{u}$ and let $U^\C$ be its complexification. We say that a subgroup $G$ of $U^\C$
is compatible if $G$ is closed and the map $K\times \mathfrak{p} \to G,$ $(k,\beta) \mapsto k\text{exp}(\beta)$ is a diffeomorpism where $K := G\cap U$ and
$\mathfrak{p} := \mathfrak{g}\cap \text{i}\mathfrak{u};$ $\mathfrak{g}$ is the Lie algebra of $G.$ The Lie algebra $\mathfrak{u}^\C$ of $U^\C$
is the direct sum $\mathfrak{u}\oplus i\mathfrak{u}.$ It follows that $G$ is compatible with the Cartan decomposition
$U^\C = U\text{exp}(\text{i}\mathfrak{u})$, $K$ is a maximal compact subgroup of
$G$ with Lie algebra $\mathfrak{k}$ and that $\mathfrak{g} = \mathfrak{k}\oplus \mathfrak{p}.$ Note that $G$ has finitely number of connected components. In the sequel we always assume that $G$ is connected.

Let $(Z,\omega)$ be a \Keler manifold with an holomorphic action of the complex reductive group $U^\C$.  We also assume $\omega$ is $U$-invariant and that there is a $U$-equivariant momentum map $\mu : Z \to \mathfrak{u}^*.$ By definition, for any $\xi \in \mathfrak{u}$ and $z\in Z,$ $d\mu^\xi = i_{\xi_Z}\omega,$ where $\mu^\xi(z) := \sx\mu(z), \xi\xs$, and $\xi_Z$ denotes the fundamental vector field induced on $Z$ by the action of $U,$
$$
\xi_Z(z) := \frac{d}{dt}|_{t=0}\text{exp}(t\xi)\cdot z.
$$
The inclusion i$\mathfrak{p}\hookrightarrow \mathfrak{u}$ induces by restriction, a $K$-equivariant map $\mu_{\text{i}\mathfrak{p}} : Z \to (\text{i}\mathfrak{p})^*.$ Using an  $\mathrm{Ad}(U)$-invariant inner product on $u$ to identify $(\text{i}\mathfrak{p})^*$ and $\mathfrak{p},$ $\mu_{\text{i}\mathfrak{p}}$ can be viewed as a map $\mu_{\mathfrak{p}} : Z \to \mathfrak{p}.$ For $\beta \in \mathfrak{p}$ let $\mu_\mathfrak{p}^\beta$ denote $\mu^{-\text{i}\beta}.$ i.e., $\mu_\mathfrak{p}^\beta(z) := -\sx\mu(z), i\beta\xs.$  Then the grad$\mu_\mathfrak{p}^\beta = \beta_Z$ where grad is computed with respect to the Riemannian metric induced by the \Keler structure. The map $\mu_\mathfrak{p}$ is called the gradient map associated with $\mu$.  For a $G$-stable locally closed real submanifold $X$ of $Z,$ we consider $\mu_\mathfrak{p}$ as a mapping $\mu_\mathfrak{p} : X\to \mathfrak{p}$ (see  \cite{heinzner-schwarz-stoetzel} for more details).

Let $\lia \subset \liep$ be a Abelian subalgebra and let $\pi:\liep \longrightarrow \lia$ be the orthogonal projection onto $\lia$. Then $\mu_{\mathfrak a}=\pi_\lia \circ \mu_{\mathfrak p}$ is the gradient map associated to $A=\exp(\lia)$.

If $\beta
\in \liep$ set
\begin{gather*}
\begin{gathered}
  G^{\beta+} :=\{g \in G : \lim_{t\to - \infty} \text{exp} ({t\beta}) g
  \text{exp} ({-t\beta}) \text { exists} \}\\
  R^{\beta+} :=\{g \in G : \lim_{t\to - \infty} \text{exp} ({t\beta})
  g \text{exp} ({-t\beta}) =e \}\\
  G^\beta=\{g\in G:\, \mathrm{Ad}(g)(\beta)=\beta\}
\end{gathered}
\qquad
  \lier^{\beta+}: = \bigoplus_{\la > 0} V_\la (\ad \beta).
\end{gather*}
Note that $ \lieg^{\beta+}= \lieg^\beta \oplus \lier^{\beta+}$. It is well-known that
$G^{\beta +} $ is a parabolic subgroup of $G$ with Lie algebra $\lieg^{\beta+}$ and  every parabolic subgroup of $G$ arises as $G^{\beta+}$ for some $\beta \in \liep$.  $R^{\beta+}$ is connected and it is the unipotent radical of $G^{\beta+}$. $G^\beta$ is a Levi factor of $G^{\beta+}$ (see \cite[Lemma 9]{LA} and \cite{borel-ji-libro} for more details).
Our first main result is the following.
\begin{yuppi}\label{main1}
Let $\beta \in \mathfrak{p}.$ Then:
\begin{itemize}
\item if $G^{\beta+}\cdot x$ is compact, then $\OO = G\cdot x$ is compact and $G^{\beta+}\cdot x$ is a finite union of connected components of $\mathrm{max}_{\OO}(\beta)=\left\{p\in \OO:\, \mathrm{max}_{z\in \OO} \, \mu_{\mathfrak p}^\beta =\mu_{\mathfrak p}^\beta (p)\right\};$
\item if $\OO$ is a compact $G$-orbit, then $\mathrm{max}_{\OO}(\beta)$ is a finite union of compact $G^{\beta+}$-orbits.
\end{itemize}
In particular, the number of compact $G^{\beta +}$-orbits is equal or bigger than the number of compact $G$-orbits.
\end{yuppi}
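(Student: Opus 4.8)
The plan is to reduce everything to the behaviour of $f:=\mupb$ on a compact orbit, using that $\operatorname{grad}f=\beta_Z$, so the gradient ascent flow of $f$ is exactly $z\mapsto\exp(t\beta)\cd z$ and $f$ is non-decreasing along it. First I would record two elementary facts used throughout. (i) If $x$ is a local maximum of the restriction of $f$ to a submanifold $N\subseteq\OO$ to which $\beta_Z$ is tangent (e.g.\ a $G^{\beta+}$-orbit, since $\beta\in\lieg^{\beta+}$, or $\OO$ itself), then $\beta_Z(x)=0$, because $\beta_Z(x)=\operatorname{grad}f(x)\in T_xN$ must also be orthogonal to $T_xN$. (ii) At a $\beta$-fixed $x$ one has $d(\exp t\beta)_x(\xi_Z(x))=(e^{t\,\ad\beta}\xi)_Z(x)=e^{t\la}\xi_Z(x)$ for $\xi\in V_\la(\ad\beta)$, so the Hessian of $f$ at $x$ acts on $\xi_Z(x)$ with eigenvalue $\la$. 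Hence a $\beta$-fixed point $x$ is a local maximum on $\OO$ exactly when $T_x\OO=\lieg\cd x$ carries no positive $\ad\beta$-eigendirection, i.e.\ when $\lier^{\beta+}\cd x=0$, equivalently $R^{\beta+}$ fixes $x$.

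For the second bullet, assume $\OO$ compact, so $f$ attains a maximum and $\mathrm{max}_{\OO}(\beta)\neq\vacuo$; by (i) every point of it is $\beta$-fixed, hence $\mathrm{max}_{\OO}(\beta)\subseteq\OO^\beta:=\OO\cap Z^\beta$ with $Z^\beta=\{\beta_Z=0\}$. Writing $\beta=i\eta$, $\eta\in\liu$, and using holomorphicity, $\beta_Z=J\eta_Z$, so $Z^\beta=\operatorname{Fix}(\exp(\R\eta))$ is the fixed set of a compact torus, hence a submanifold; thus $\OO^\beta$ is a closed submanifold with $T_c\OO^\beta=\lieg^\beta\cd c$. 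On $Z^\beta$ the function $f$ is $K^\beta$-invariant (from $K$-equivariance of $\mu$ and $\Ad(k)\beta=\beta$) and $\exp(\liep^\beta)$-invariant (its derivative there is $\langle\beta_Z,\eta_Z\rangle=0$), so $f$ is $G^\beta$-invariant on $Z^\beta$; being critical, it is locally constant on $\OO^\beta$. Therefore $\mathrm{max}_{\OO}(\beta)$ is the union of the finitely many components of $\OO^\beta$ on which $f$ is globally maximal. On such a component $C$, for each $c$ one has $T_c(G^\beta\cd c)=\lieg^\beta\cd c=T_cC$, so $G^\beta\cd c$ is open in $C$; since the $G^\beta$-orbits partition $C$ into open sets and $C$ is connected, $C=G^\beta\cd c$. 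By (ii), $\lier^{\beta+}$ acts trivially on $\mathrm{max}_{\OO}(\beta)$, so $R^{\beta+}$ fixes it pointwise; with $G^\beta$-invariance this gives $G^{\beta+}\cd c=G^\beta\cd c=C$, and $\mathrm{max}_{\OO}(\beta)$ is a finite disjoint union of compact $G^{\beta+}$-orbits.

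For the first bullet, suppose $G^{\beta+}\cd x$ is compact. Compactness of $\OO=G\cd x$ follows by pushing forward: the associated bundle $G\times_{G^{\beta+}}(G^{\beta+}\cd x)\to G/G^{\beta+}$ has compact base ($G^{\beta+}$ is parabolic, so $G/G^{\beta+}$ is a real flag manifold, hence compact) and compact fibre, so its total space is compact, and the evaluation map onto $G\cd x$ is continuous and surjective. Since $R^{\beta+}$ is unipotent, its orbits are closed and Euclidean; $R^{\beta+}\cd x$ is closed in the compact $G^{\beta+}\cd x$, forcing it to be a point, so $R^{\beta+}$ fixes $x$ and $G^{\beta+}\cd x=G^\beta\cd x$. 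Let $y\in G^{\beta+}\cd x$ maximise $f$ over this compact orbit; by (i) $\beta_Z(y)=0$, and any $\xi\in\lier^{\beta+}$ with $\xi_Z(y)\neq0$ would by (ii) give a direction inside $G^{\beta+}\cd x$ along which $f$ strictly increases, contradicting maximality, so $\lier^{\beta+}\cd y=0$ and $y$ is a local maximum of $f$ on $\OO$. Hence $G^{\beta+}\cd x=G^\beta\cd y\subseteq\OO^\beta$ and $f\equiv f(y)$ on $G^{\beta+}\cd x$. The remaining step, upgrading ``$y$ is a local maximum'' to ``$y$ is a global maximum'', is the one I expect to be the main obstacle, since a priori a function may have local maxima below its maximum. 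I would close it with the convexity theorem for the abelian gradient map $\mu_\liea$ (with $\beta\in\liea$ maximal abelian): $\mu_\liea(\OO)$ is a convex polytope and $f=\langle\mu_\liea,\beta\rangle$, so $f$ has no local maxima strictly below its maximum; equivalently, the flag-manifold structure of $\OO$ forces the top critical value onto a single critical stratum. Granting this, $f(y)=\mathrm{max}_{\OO}f$, so $G^{\beta+}\cd x\subseteq\mathrm{max}_{\OO}(\beta)$, and being $G^{\beta+}$-invariant and compact it is a union of connected components of $\mathrm{max}_{\OO}(\beta)$.

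Finally, the numerical statement is a counting consequence of the two bullets. The assignment $G^{\beta+}\cd x\mapsto G\cd x$ sends compact $G^{\beta+}$-orbits to compact $G$-orbits, well defined by the first bullet. It is surjective: for a compact $G$-orbit $\OO$ the set $\mathrm{max}_{\OO}(\beta)$ is non-empty (a maximum is attained on the compact $\OO$) and, by the second bullet, is a union of compact $G^{\beta+}$-orbits, each contained in and hence mapping back to $\OO$. A surjection cannot decrease cardinality, so the number of compact $G^{\beta+}$-orbits is at least the number of compact $G$-orbits.
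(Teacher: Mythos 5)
Your second bullet is essentially sound and runs parallel to the paper's own argument: your facts (i)--(ii) are Proposition \ref{tangent} together with the linearization results (Corollaries \ref{slice-cor-2} and \ref{MorseBott}), and your invariance/openness argument reproduces Lemma \ref{lemmm} and the second half of Proposition \ref{Maxx}. The genuine gap sits exactly where you flagged it: promoting ``$y$ is a local maximum of $\mupb$ on $\OO$'' to ``$y$ is a global maximum''. Your proposed closure via convexity of $\mu_\liea(\OO)$ is a non sequitur: convexity of the \emph{image} of a map carries no information about local maxima of the composition $z\mapsto\langle\mu_\liea(z),\beta\rangle$ on the source. Any smooth function on a compact connected manifold has convex image (an interval) and may still have strict local maxima below its global maximum; the Atiyah/Guillemin--Sternberg statement that components of a momentum map admit no non-global local maxima is a theorem about Hamiltonian \emph{torus} actions on compact symplectic manifolds, proved via connectedness of level sets, and $\OO$ here is only a real $G$-orbit carrying a gradient map, so that theorem does not apply. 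The paper closes this hole by a different mechanism (Lemma \ref{gm}): since $\OO$ is compact, Proposition \ref{heinzner-maximun} gives $\OO=K\cdot x$, so $\mup\restr{\OO}$ is a $K$-equivariant submersion onto the orbit $K\cdot\mup(x)\subset\liep$; a local maximum of $\mupb$ on $\OO$ then produces a local maximum of the height function $\langle\cdot,\beta\rangle$ on $K\cdot\mup(x)$, and by \cite[Prop.~3.9]{LA} --- a structural fact about such $K$-orbits in $\liep$, not a formal consequence of image convexity --- local maxima of height functions on these orbits are automatically global. Without this ingredient (or a genuine substitute), your first bullet, and hence the counting statement, remains unproved.

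A secondary unjustified step: ``since $R^{\beta+}$ is unipotent, its orbits are closed.'' Kostant--Rosenlicht closedness of unipotent orbits holds for algebraic actions on \emph{affine} varieties; here the action on $X\subset Z$ is merely smooth, and even holomorphic unipotent actions on compact manifolds can have non-closed orbits (the orbit $\C\subset\PP^1$ of the standard unipotent subgroup of $\Sl(2,\C)$). Fortunately you do not need this claim: your Hessian argument at the maximiser $y$, which is precisely the paper's argument in Proposition \ref{Maxx}, already yields $\lier^{\beta+}\cdot y=0$ and hence the $R^{\beta+}$-fixedness you want. Your derivation of compactness of $\OO$ from compactness of $G^{\beta+}\cdot x$ (via the bundle over $G/G^{\beta+}$, equivalently $G=KG^{\beta+}$) and your final counting argument agree in substance with the paper's remark (a) and are fine.
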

Observe that $\mu_{\mathfrak p} (\OO)$ is a $K$-orbit but it is not true in general that $\mu_{\mathfrak p}$ defines a diffeomporhism between $\OO$ and $\mup(\OO)$, without the assumption that $G$ is a complex reductive group. Therefore, Theorem $1.2$ in \cite[pag. $582$]{LA} does not apply in our context.

Let $\xi \in \mathfrak u$. The standard notation for parabolic subgroups of complex reductive groups, see for instance \cite{Kirwan},
is given by
\[
U^{\C}(\xi)=\{g\in U^\C:\, \lim_{t\to -\infty} \exp(it\xi)g\exp(-it\xi)\, \mathrm{exists}\, \}.
\]
It is well-known that $U^{\C}(\xi)$ is connected and it contains a Borel subgroup, that is, a maximal solvable subgroup of $U^\C$ \cite{akhiezer}.
Hence, if $\beta \in i \mathfrak u$, then $U^{\C}(-i\beta)$ corresponds to $(U^{\C})^{\beta+}$ in our notation. If $\tilde \OO$ is a compact orbit of $U^\C$ then it is a complex $U$ orbit  \cite{heinzner-schwarz-stoetzel} and so a flag manifold \cite{Guillemin2}. Since
\[
\mathrm{max}_{\tilde \OO}(\beta)=\left\{p\in \tilde \OO:\, \mathrm{max}_{z\in \tilde \OO} \mu^{-i\beta} =\mu^{-i\beta} (p) \right\},
\]
it follows that $\mathrm{max}_{\tilde \OO}(\beta)$ is connected \cite{Atiyah,Guillemin}. Hence the following result, see also \cite{biliotti-ghigi-heinzner-1-preprint}, holds.
\begin{corollary}
The number of compact $(U^\C)^{\beta +}$-orbits is equal to the number of compact $U^\C$-orbits. Moreover, any closed $(U^\C)^{\beta +}$-orbit arises as $\mathrm{max}_{\tilde \OO}(\beta)$, where $\tilde \OO$ is a compact $U^\C$-orbit.
\end{corollary}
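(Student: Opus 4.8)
The plan is to deduce the corollary from Theorem~\ref{main1} specialized to the complex reductive group $G = U^\C$, thereby upgrading the final inequality of that theorem to an equality. First I would record that $U^\C$ is a compatible subgroup of itself: its Cartan decomposition $U^\C = U\exp(i\liu)$ gives $K = U$ and $\liep = i\liu$, so the framework of the paper applies verbatim and the gradient map satisfies $\mu_\liep^\beta = \mu^{-i\beta}$. For $\beta \in i\liu$ one has $(U^\C)^{\beta+} = U^\C(-i\beta)$, which is connected and parabolic, so that every $(U^\C)^{\beta+}$-orbit is connected.

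Next I would introduce the assignment $\tilde\OO \mapsto \mathrm{max}_{\tilde\OO}(\beta)$ from compact $U^\C$-orbits to compact $(U^\C)^{\beta+}$-orbits and check that it is well defined. By the second bullet of Theorem~\ref{main1}, $\mathrm{max}_{\tilde\OO}(\beta)$ is a finite union of compact $(U^\C)^{\beta+}$-orbits; since $\tilde\OO$ is a flag manifold and $\mu^{-i\beta}$ is a component of the momentum map of the torus action generated by $-i\beta \in \liu$, the Atiyah--Guillemin--Sternberg convexity theorem makes $\mathrm{max}_{\tilde\OO}(\beta)$ connected. As the orbits in this union are disjoint and compact, each is open and closed in $\mathrm{max}_{\tilde\OO}(\beta)$, so connectedness forces the union to be a single orbit; thus the assignment indeed lands in the set of compact $(U^\C)^{\beta+}$-orbits.

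I would then prove bijectivity. Injectivity is immediate, since $\mathrm{max}_{\tilde\OO}(\beta) \subset \tilde\OO$ and distinct $U^\C$-orbits are disjoint. Surjectivity is exactly the first bullet of Theorem~\ref{main1}: a compact orbit $(U^\C)^{\beta+}\cdot x$ forces $\tilde\OO := U^\C\cdot x$ to be compact and realizes $(U^\C)^{\beta+}\cdot x$ as a finite union of connected components of $\mathrm{max}_{\tilde\OO}(\beta)$; connectedness collapses this to all of $\mathrm{max}_{\tilde\OO}(\beta)$, the image of $\tilde\OO$. This simultaneously yields the second assertion of the corollary, that every closed $(U^\C)^{\beta+}$-orbit is of the form $\mathrm{max}_{\tilde\OO}(\beta)$.

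The reduction is essentially formal once Theorem~\ref{main1} and the connectedness of $(U^\C)^{\beta+}$ are in hand, so I expect no serious difficulty at the level of this corollary. The one genuinely new ingredient is the connectedness of $\mathrm{max}_{\tilde\OO}(\beta)$, which does not come from the real theory but from convexity on the flag manifold $\tilde\OO$; this is precisely the feature that fails for general real $G$, where one only obtains a finite union of components and hence the weaker inequality of Theorem~\ref{main1}.
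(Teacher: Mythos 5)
Your proposal is correct and follows essentially the same route as the paper: specialize Theorem~1 to $G = U^\C$ (compatible in itself with $K=U$, $\liep = i\liu$), note that a compact $U^\C$-orbit is a flag manifold on which $\mathrm{max}_{\tilde\OO}(\beta)$ is the top level set of the momentum-map component $\mu^{-i\beta}$ and hence connected by Atiyah/Guillemin--Sternberg, and conclude that the finite union of orbits collapses to a single one, giving the bijection. The paper leaves the bijection argument implicit where you spell it out, but there is no substantive difference.
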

Assume that $G$ is a real form of $U^\C$. Assume there exists $p\in Z$ such that $X=U^\C\cdot p$ is compact. If $Z$ is compact, then the $U^\C$ orbit throughout the maximum of the norm square function  $\parallel \mu \parallel^2$ is a compact orbit and so it is a flag manifold \cite{heinzner-schwarz-stoetzel}. It is well-known that $G$ has a unique closed orbit $\OO$ in $X$. This is an old result of Wolf \cite{Wolf}, see also \cite{heinzner-schwarz-stoetzel}. In this setting, we prove the following result.
\begin{yuppi}
The set $\mathrm{max}_{\OO}(\beta)$ is the unique closed orbit of $G^{\beta+}$ acting on $X$. This orbit is connected and it is a $(K^{\beta})^o$ orbit.
\end{yuppi}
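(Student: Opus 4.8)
The plan is to realize $\mathrm{max}_{\OO}(\beta)$ as the closed orbit of a real form inside a smaller flag manifold and then reduce everything to Wolf's uniqueness theorem. Write $\xi=-\mathrm{i}\beta\in\liu$, so that $\mupb=\mu^{\xi}$, and set $\widetilde{\mathcal M}:=\{z\in X:\mupb(z)=\max_{X}\mupb\}$. By the Corollary, $\widetilde{\mathcal M}$ is the unique closed $(U^\C)^{\beta+}$-orbit; since the unipotent radical of the complex parabolic acts trivially on it, $\widetilde{\mathcal M}$ is a single orbit of the Levi factor $(U^\C)^{\beta}=Z_{U^\C}(\beta)$, hence a flag manifold. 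Because $(U^\C)^{\beta}$ is the complexification of the connected centralizer $Z_U(\xi)$ and $G$ is a real form of $U^\C$, the centralizer $G^\beta=Z_G(\beta)$ is a real form of $(U^\C)^{\beta}$, with maximal compact subgroup $K^\beta=Z_K(\beta)$ and Cartan decomposition $\lieg^\beta=\liek^\beta\oplus\liep^\beta$. The goal is then to prove $\mathrm{max}_{\OO}(\beta)=\OO\cap\widetilde{\mathcal M}$ and that this set is a single $(K^\beta)^o$-orbit.

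First I would show $\max_{\OO}\mupb=\max_{X}\mupb$, so that $\mathrm{max}_{\OO}(\beta)=\OO\cap\widetilde{\mathcal M}$. Since $\widetilde{\mathcal M}$ is compact and $G^{\beta+}$-invariant (as $G^{\beta+}\subset(U^\C)^{\beta+}$), an orbit of minimal dimension in $\widetilde{\mathcal M}$ is a closed, hence compact, $G^{\beta+}$-orbit; by Theorem \ref{main1} its $G$-saturation is compact, so by Wolf's theorem \cite{Wolf} it must equal $\OO$. This produces a point of $\OO\cap\widetilde{\mathcal M}$ and forces the two maxima to agree. Next I would prove that $R^{\beta+}$ fixes every $x\in\mathrm{max}_{\OO}(\beta)$ pointwise: at such $x$ the gradient $\beta_X(x)=\mathrm{grad}\,\mupb(x)$ vanishes, so $\exp(-t\beta)x=x$; for $g\in R^{\beta+}$ one then has $\exp(t\beta)(g\cdot x)=\big(\exp(t\beta)g\exp(-t\beta)\big)\cdot x\to x$ as $t\to-\infty$, while $t\mapsto\mupb(\exp(t\beta)(g\cdot x))$ is non-decreasing and bounded above by $\max_{\OO}\mupb=\mupb(x)$. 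Hence this function is constant, its gradient vanishes along the flow, $g\cdot x$ is a fixed point of $\exp(t\beta)$, and letting $t\to-\infty$ gives $g\cdot x=x$. Consequently $G^{\beta+}$ acts on $\mathrm{max}_{\OO}(\beta)$ through $G^\beta$.

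The heart of the argument is the identification of $\OO\cap\widetilde{\mathcal M}$ as a single $(K^\beta)^o$-orbit. By Theorem \ref{main1} this set is a finite union of compact $G^{\beta+}$-orbits, each of which, by the previous step, is a compact (hence closed) $G^\beta$-orbit lying in the flag manifold $\widetilde{\mathcal M}$. Now I would apply Wolf's theorem to the connected real form $(G^\beta)^o$ of $(U^\C)^{\beta}$ acting on $\widetilde{\mathcal M}$: it has a unique closed orbit $\OO_\beta$, which by the Iwasawa decomposition $(G^\beta)^o=(K^\beta)^oAN$ is $(K^\beta)^o$-homogeneous and therefore connected. Since $G^\beta$ normalizes $(G^\beta)^o$, it permutes the closed $(G^\beta)^o$-orbits and must preserve the unique one; thus every closed $G^\beta$-orbit in $\widetilde{\mathcal M}$ decomposes into closed $(G^\beta)^o$-orbits each equal to $\OO_\beta$, forcing it to equal $\OO_\beta$ itself. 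Therefore $\mathrm{max}_{\OO}(\beta)=\OO_\beta=(K^\beta)^o\cdot x$ is connected and $(K^\beta)^o$-homogeneous. Uniqueness then follows formally: any closed $G^{\beta+}$-orbit in $X$ is compact, so by Theorem \ref{main1} and Wolf's theorem it lies in $\mathrm{max}_{\OO}(\beta)$, which is the single $G^{\beta+}$-orbit just described.

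I expect the main obstacle to be connectedness (equivalently, the $(K^\beta)^o$-homogeneity), since a priori $G^{\beta+}$, and even the Levi $G^\beta$, may be disconnected, so that $\mathrm{max}_{\OO}(\beta)$ is only known to be a finite union of compact $G^{\beta+}$-orbits. The device that resolves this is to pass to the identity component $(G^\beta)^o$ and exploit Wolf's uniqueness there: the uniqueness of the closed $(G^\beta)^o$-orbit is rigid enough that the component group cannot move it, which simultaneously collapses the finite union to a single orbit and upgrades $K^\beta$-homogeneity to $(K^\beta)^o$-homogeneity. The remaining points are routine: the standard fact that the unipotent radical of a parabolic acts trivially on its closed orbit (yielding the flag manifold $\widetilde{\mathcal M}$), and the $K$-transitivity of the closed orbit of a real form via Iwasawa.
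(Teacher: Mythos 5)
Your proposal is correct, and its skeleton coincides with the paper's: realize $\mathrm{max}_{U^\C\cdot x}(\beta)$ as the unique closed $(U^\C)^{\beta+}$-orbit, hence a flag manifold of the Levi factor $(U^\C)^{\beta}$; show that $\mathrm{max}_{\OO}(\beta)=\OO\cap \mathrm{max}_{U^\C\cdot x}(\beta)$ is a nonempty finite union of compact $G^{\beta+}$-orbits on which $R^{\beta+}$ acts trivially; then use Wolf's uniqueness theorem for a real form of the Levi acting on this flag manifold to collapse the union to a single connected orbit, and finally upgrade to $(K^\beta)^o$-homogeneity. Where you genuinely differ is in how the sub-steps are justified. (i) The paper applies Wolf to the semisimple part $(G^\beta)^o_{ss}$, viewed as a real form of $((U^\C)^\beta)_{ss}$, after observing that the relevant centers act trivially on the flag manifold; you apply Wolf directly to the reductive group $(G^\beta)^o$, which is legitimate only after that same reduction (Wolf's theorem concerns semisimple groups), so this should be made explicit. (ii) For nonemptiness of $\OO\cap\mathrm{max}_{U^\C\cdot x}(\beta)$ the paper reruns the gradient-map argument of Proposition \ref{Maxx}, whereas you take an orbit of minimal dimension in $\mathrm{max}_{U^\C\cdot x}(\beta)$ and assert it is closed; this assertion fails for general smooth actions (irrational flow on a torus) and is valid here only because $U^\C\cdot x$ is a flag manifold, so the actions involved are algebraic and orbits are locally closed with lower-dimensional boundary. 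Either say this, or avoid it as the paper does by maximizing the norm-square function $\nu_\liep$ and invoking Proposition \ref{heinzner-maximun}. (iii) Your direct gradient-flow proof that $R^{\beta+}$ fixes $\mathrm{max}_{\OO}(\beta)$ pointwise is a clean, self-contained replacement for the paper's Lemma \ref{lemmm}/Proposition \ref{tangent}. (iv) For $(K^\beta)^o$-homogeneity you use the Iwasawa decomposition and the classical fact that a closed orbit of a real form on a flag manifold is an orbit of the maximal compact subgroup; the paper instead quotes Proposition \ref{heinzner-maximun} (compact orbits of compatible groups are $K$-orbits). In summary, the paper's proof stays entirely within the gradient-map machinery of \cite{heinzner-schwarz-stoetzel}, which makes it uniform and independent of algebraicity, while yours substitutes classical algebraic-group and Lie-theoretic facts at (ii) and (iv); both routes work, but yours should acknowledge the two points above, which are true yet not automatic.
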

As a consequence of the proof we obtain the following result.
\begin{proposition}
Let $\mathfrak a \subset \liep$ be an Abelian subalgebra. Let $\mu_{\mathfrak a} :X \longrightarrow \mathfrak a$ the corresponding $A=\exp(\mathfrak a)$-gradient map. Then $\mu_{\mathfrak a}(X)=\mu_{\mathfrak a}(\OO)$.
\end{proposition}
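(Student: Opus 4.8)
The plan is to deduce the equality $\mu_{\lia}(X)=\mu_{\lia}(\OO)$ from a comparison of support functions. The inclusion $\mu_{\lia}(\OO)\subseteq\mu_{\lia}(X)$ is immediate from $\OO\subseteq X$, so the whole content is the reverse inclusion. Since $A=\exp(\lia)$ is abelian and both $X=U^\C\cdot p$ and the compact orbit $\OO$ are compact and connected, the convexity theorem for abelian gradient maps shows that $\mu_{\lia}(X)$ and $\mu_{\lia}(\OO)$ are compact convex subsets of $\lia$; for $\OO$ one applies the theorem to the $A$-gradient map obtained by restricting $\mu_{\lia}$ to the $A$-invariant submanifold $\OO$ (the field $\beta_X$ is tangent to $\OO$, so its restriction is the gradient computed inside $\OO$). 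A compact convex set is determined by its support function $\beta\mapsto\max\sx\,\cdot\,,\beta\xs$, $\beta\in\lia$; hence, given the inclusion above, it suffices to prove that the two support functions coincide.

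First I would compute these support functions. For $\beta\in\lia$ the orthogonality of $\pi:\liep\to\lia$ gives $\mu_{\lia}^\beta=\sx\pi\circ\mup,\beta\xs=\sx\mup,\beta\xs=\mupb$ on all of $X$, because $\beta$ lies in the image of $\pi$. Consequently the two support functions are $\beta\mapsto\max_X\mupb$ and $\beta\mapsto\max_\OO\mupb$, and the proposition is equivalent to the statement
\[
\max_{x\in X}\mupb(x)=\max_{z\in\OO}\mupb(z)\qquad\text{for every }\beta\in\lia\subseteq\liep .
\]
Here the inequality $\max_X\mupb\ge\max_\OO\mupb$ is trivial, and the point is the reverse one, i.e. that the maximum over the larger set $X$ is already attained on $\OO$.

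To obtain this I would use the preceding theorem. Consider the maximum set $\mathrm{max}_X(\beta)=\{x\in X:\mupb(x)=\max_X\mupb\}$. It is compact, and I claim it is $G^{\beta+}$-invariant: its points are critical for $\mupb$, hence fixed by $\exp(\R\beta)$; it is clearly $K^\beta$-invariant; and it is $R^{\beta+}$-invariant, because for $g\in R^{\beta+}$ and $x\in\mathrm{max}_X(\beta)$ the curve $t\mapsto\exp(t\beta)gx=(\exp(t\beta)g\exp(-t\beta))x$ tends to $x$ as $t\to-\infty$ while $\mupb$ is nondecreasing along the gradient flow $t\mapsto\exp(t\beta)\cdot x$, which forces $\mupb$ to be constantly equal to $\max_X\mupb$ along the curve and in particular $\mupb(gx)=\max_X\mupb$. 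A compact $G^{\beta+}$-invariant subset of the projective variety $X$ contains a closed $G^{\beta+}$-orbit, for instance an orbit of minimal dimension. By the preceding theorem the unique closed $G^{\beta+}$-orbit in $X$ is $\mathrm{max}_{\OO}(\beta)$, so $\mathrm{max}_{\OO}(\beta)\subseteq\mathrm{max}_X(\beta)$. Any $q\in\mathrm{max}_{\OO}(\beta)$ then realizes both $\max_\OO\mupb$ and $\max_X\mupb$, which yields the displayed equality and hence the proposition.

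The main obstacle is the full $G^{\beta+}$-invariance of $\mathrm{max}_X(\beta)$. Invariance under $\exp(\R\beta)$, $K^\beta$ and the unipotent radical $R^{\beta+}$ is elementary, as above, but invariance under the noncompact part $\exp(\liep^\beta)$ of the Levi factor $G^\beta$ has to be argued through the abelian convexity picture for the gradient map of the abelian subalgebra generated by $\beta$ and a vector of $\liep^\beta$. This is exactly the information produced in the proof of the preceding theorem, which identifies $\mathrm{max}_{\OO}(\beta)$ as the maximum set and shows it is the unique closed $G^{\beta+}$-orbit; once that invariance is available, the existence of a closed orbit inside a compact invariant set together with the uniqueness supplied by the theorem completes the argument.
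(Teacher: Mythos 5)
Your overall skeleton coincides with the paper's own proof: both arguments reduce the statement, via the polytope property of abelian gradient map images and the support-function criterion of Proposition \ref{convex-criterium}, to showing that $\mathrm{max}_{x\in X}\mupb(x)=\mathrm{max}_{z\in\OO}\mupb(z)$, i.e.\ that $\mathrm{max}_X(\beta)$ meets $\OO$. The one genuine gap is the one you flag yourself: invariance of $\mathrm{max}_X(\beta)$ under $\exp(\liep^\beta)$. Your proposed repair is misdirected --- no ``abelian convexity picture'' is needed, and this invariance is not something extracted from the proof of the second main theorem. It is exactly Lemma \ref{lemmm}, which is stated for an \emph{arbitrary} $G$-stable submanifold and therefore applies verbatim with that submanifold taken to be $X=U^\C\cdot x$ itself: every point of $\mathrm{max}_X(\beta)$ is a zero of $\beta_X$; the identity $(\mathrm{d}g)_p(\xi_X)=(\Ad(g)(\xi))_X(gp)$ shows $G^\beta$ preserves the zero set of $\beta_X$; hence for $\xi\in\liep^\beta$ and $y\in\mathrm{max}_X(\beta)$ the curve $\gamma(t)=\exp(t\xi)\cdot y$ stays in that zero set, so $\frac{d}{dt}\mupb(\gamma(t))=(\beta_X(\gamma(t)),\xi_X(\gamma(t)))=0$ and $\gamma$ remains in $\mathrm{max}_X(\beta)$. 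Once you cite or reproduce this lemma, your argument closes.

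With that repair your route is correct but differs from the paper's in the middle layer. To produce a closed $G^{\beta+}$-orbit inside the compact invariant set $\mathrm{max}_X(\beta)$ you take an orbit of minimal dimension; this needs algebraicity of the action (boundaries of orbits are unions of strictly lower-dimensional orbits), which holds here since $X$ is a flag manifold and $G^{\beta+}$ is real algebraic, but fails for general smooth actions (an irrational flow on a torus has compact invariant sets with no closed orbit); the paper instead gets existence of closed orbits from Proposition \ref{heinzner-maximun}, as in Proposition \ref{Maxx}, with no algebraicity. You then identify the orbit via the second main theorem's uniqueness of the closed $G^{\beta+}$-orbit; this is not circular, since that theorem is proved independently of the present proposition, but you need uniqueness among closed orbits in all of $X$, not just in $\OO$ --- that stronger form follows by combining the theorem with Theorem \ref{summing} and Theorem \ref{Unique closed} (any compact $G^{\beta+}$-orbit lies in a compact $G$-orbit, which must be $\OO$). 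The paper's own route is lighter: by Corollary \ref{complex-parabolic}, $\mathrm{max}_X(\beta)$ is the unique closed $(U^\C)^{\beta+}$-orbit, one finds a closed $G^{\beta+}$-orbit inside it by the argument of Proposition \ref{Maxx}, and Wolf's Theorem \ref{Unique closed} then forces this orbit to lie in $\OO$. Finally, your $R^{\beta+}$-invariance argument via monotonicity of $\mupb$ along the flow of $\beta_X$ is correct and is a nice alternative to the Hessian argument used in Lemma \ref{lemmm}.
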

It is well-known that both  $\mu_{\mathfrak a}(X)$ and $\mu_{\mathfrak a}(\OO)$ are polytope \cite{heinzner-schwarz-stoetzel}. The above result tells us that $\OO$ captures much of the informations of $\mu_{\mathfrak a}$. Note that if $\lia \subset \liep$ is a maximal Abelian subalgebra then by a beautiful Theorem of Kostant \cite{kostant-convexity}, keeping in mind that $\mu_{\mathfrak a}=\pi_\lia \circ \mu_{\mathfrak p}$ and $\mup(\OO)$ is a $K$ orbit in $\liep$, the set $\mu_{\mathfrak a}(X)$ is the convex hull of an orbit of the Weyl group $W(\lieg,\lia)=\{\mathrm{Ad}(k):\, k\in K,\, \mathrm{Ad}(k)(\lia)=\lia\}$ (see \cite{knapp-beyond} for more details on Weyl group).
\section{Preliminaries}
\subsection{Convex geometry}
\label{pre-convex}
In this section, some definitions and results in convex geometry are recalled. The reader can see e.g. \cite{schneider-convex-bodies} and \cite{biliotti-ghigi-heinzner-1-preprint} for further details on the topic. Let $V$ be a real vector space with a scalar product $\scalo$ and let $E\subset V$ be a \changed{compact} convex subset. The \emph{relative interior} of $E$, denoted $\relint E$, is the interior of $E$ in its affine hull. For $a,b\in E,$ denote the close segment joining $a$ and $b$ by $[a,b]$. Then, a face of $E$ is a convex subset $F$ of $E$ such that if $a,b\in E$ and $\relint[a,b]\cap F\neq \vacuo$, then
 $[a,b]\subset F$.  The \emph{extreme points} of $E$ denoted by $\ext E$ are the points
 $a\in E$ such that $\{a\}$ is a face. Since  $E$ is compact the faces
 are closed \cite[p. 62]{schneider-convex-bodies}. The empty set and $E$ are faces of $E:$ the other faces are called \enf{proper}.
 \begin{definition}
  The support function of $E$ is defined by the function $ h_E : V \ra \R$, $
 h_E(u) = \max \{\sx x, u \xs : x\in E\}$.  If $ u\neq 0$, the
 hyperplane $H(E, u) : = \{ x\in E : \sx x, u \xs = h_E(u)\}$ is
 called the supporting hyperplane of $E$ for $u$.
The set
   \begin{gather}
     \label{def-exposed}
     F_u (E) : = E \cap H(E,u)
   \end{gather}
   is a face and it is called the \enf{exposed face} of $E$ defined by
   $u$.
 \end{definition}
In general not all faces of a convex subset are exposed. For instance, consider the convex hull of a closed disc and a point outside the disc: the resulting convex set is the union of the
 disc and a triangle. The two vertices of the triangle that lie on the
 boundary of the disc are non-exposed 0-faces.

The following result is probably well-known. A proof is given in \cite{LBS}. For sake of completeness we give a proof.
\begin{proposition}\label{convex-criterium}
Let $C_1 \subseteq C_2$ be two compact convex set of $V$. Assume that for any $\beta \in V$ we have
\[
\mathrm{max}_{y\in C_1} \langle y , \beta \rangle=\mathrm{max}_{y\in C_2} \langle y , \beta \rangle.
\]
Then $C_1=C_2$.
\end{proposition}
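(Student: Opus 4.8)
The plan is to establish the result by contradiction, using the observation that the common value of the two maxima is exactly the support function evaluated at $\beta$; in other words, the hypothesis says precisely that $h_{C_1}(\beta)=h_{C_2}(\beta)$ for every $\beta\in V$, and the assertion $C_1=C_2$ is then the classical statement that a compact convex set is uniquely determined by its support function. Since the inclusion $C_1\subseteq C_2$ is already given, it suffices to rule out the possibility that it is strict.

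First I would assume, for contradiction, that there exists a point $x_0\in C_2$ with $x_0\notin C_1$. The key step is to separate $x_0$ from the compact convex set $C_1$. Since $C_1$ is nonempty, compact and convex and $x_0\notin C_1$, the Hahn--Banach separation theorem furnishes a vector $\beta\in V$ that strictly separates $x_0$ from $C_1$, that is $\langle x_0,\beta\rangle > \langle y,\beta\rangle$ for all $y\in C_1$. Because $C_1$ is compact, the maximum $h_{C_1}(\beta)=\max_{y\in C_1}\langle y,\beta\rangle$ is attained, and passing to the maximum over $y\in C_1$ in this strict inequality yields $\langle x_0,\beta\rangle > h_{C_1}(\beta)$.

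Then I would use that $x_0\in C_2$ to conclude $h_{C_2}(\beta)=\max_{y\in C_2}\langle y,\beta\rangle \ge \langle x_0,\beta\rangle > h_{C_1}(\beta)$, contradicting the hypothesis that the two maxima coincide for every $\beta$. Hence no such $x_0$ can exist, the inclusion $C_1\subseteq C_2$ is in fact an equality, and the proposition follows.

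The only delicate point, and it is a mild one, is the strict separation of $x_0$ from $C_1$: this is exactly where the compactness (or at least the closedness) of $C_1$ enters, guaranteeing both that the separating functional can be chosen with a strict inequality and that the supremum defining $h_{C_1}(\beta)$ is realized, so that the strict inequality is preserved upon passing to the maximum. Everything else is a direct comparison of support functions.
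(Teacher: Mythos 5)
Your proof is correct, but it takes a genuinely different route from the paper's. You strictly separate a point $x_0 \in C_2$ not belonging to $C_1$ from the compact convex set $C_1$ via Hahn--Banach, and the contradiction $h_{C_2}(\beta) \ge \langle x_0,\beta\rangle > h_{C_1}(\beta)$ is then immediate; this is the classical argument that a compact convex set is determined by its support function. The paper argues differently: after reducing to the case where the affine hull of $C_2$ is all of $V$, it picks a point $p \in \partial C_1$ lying in the interior of $C_2$, uses the fact that every face of a compact convex set is contained in an exposed face to produce $\beta$ with $\max_{y\in C_1}\langle y,\beta\rangle = \langle p,\beta\rangle$, and then observes that, by the hypothesis, the linear functional $\langle\cdot,\beta\rangle$ would attain its maximum over $C_2$ at an interior point, which is impossible. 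Your separation argument buys simplicity and robustness: it needs no reduction to full-dimensional $C_2$, no facial machinery, and no boundary point of $C_1$ interior to $C_2$ --- a point that actually requires care in the paper's version, since when $C_1$ lies entirely in $\partial C_2$ (for instance when $C_1$ is a proper face of $C_2$) such a $p$ need not exist; your argument also works verbatim assuming only that $C_1$ is compact. What the paper's approach buys is coherence with its own toolkit: supporting hyperplanes and exposed faces are exactly the notions set up in the preliminaries on convex geometry, so its proof doubles as an illustration of the machinery used elsewhere in the paper.
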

\begin{proof}
We may assume without loss of generality  that the affine hull of $C_2$ is $V$.
Assume by contradiction that $C_1 \subsetneq C_2$. Since $C_1$ and $C_2$ are both compact, it follows that there exists $p\in \partial C_1$ such that $p\in \stackrel{o}{C_2}$. Since every face of a convex compact set is contained in an exposed face \cite{schneider-convex-bodies}, there exists $\beta \in V$ such that
\[
\mathrm{max}_{y\in C_1} \langle y , \beta \rangle=\langle p, \beta \rangle.
\]
This means the linear function $x\mapsto \langle x, \beta \rangle$ restricted on $C_2$ achieves its maximum at an interior point which is a contradiction.
\end{proof}
\subsection{Gradient map}
\label{subsection-gradient-moment}
Let $(Z, \om)$ be a \Keler manifold. Let $U^\C$ acts
holomorphically on $Z$ i.e., the action $U^\C \times Z \to Z$ is holomorphic. Assume that $U$ preserves $\om$ and that there is a $U$-equivariant
momentum map $\mu: Z \ra \liu$.  If $\xi \in \liu$, we denote by $\xi_Z$
the induced vector field on $Z$ and we let $\mu^\xi \in \cinf(Z)$ be
the function $\mu^\xi(z) := \sx \mu(z),\xi\xs$. By definition, we have
$$
d\mu^\xi =
i_{\xi_Z} \om.
$$

Let $G \subset U^\C$ be a compatible subgroup of $U^\C$.
For $x\in Z$, let $\mu_\mathfrak{p}^\beta(x)$ denote $-i$ times the component of $\mu (x)$ along $\beta$ in the direction of $i\mathfrak{p},$ i.e.,
\begin{equation}\label{mu3}
\mu_\mathfrak{p}^\beta(x) := \langle \mu_\mathfrak{p}(x), \beta \rangle = -\langle \mu(x), i\beta \rangle = \mu^{-i\beta}
\end{equation} for any $\beta \in \mathfrak{p}.$ Here $\langle\cdot , \cdot\rangle$ denotes $K$-invariant inner product on $\mathfrak{p} \subset i\mathfrak{u}.$ Then, the map defined by $\mu_\liep : Z \ra \liep$ is called the \emph{gradient map}.
Let $\mup^\beta \in \cinf(Z)$ be the function $ \mup^\beta(z) = \sx
\mup(z) , \beta\xs = \mu^{-i\beta}(z)$.  Let $\metrica$ be the \Keler
metric associated to $\om$, i.e. $(v, w) = \om (v, Jw),$ for all $z\in Z$ and $v,w\in T_zZ$ where $J$ denotes the complex structure on $TZ$. Then
$\beta_Z$ is the gradient of $\mup^\beta$.

For the rest of this paper, we assume that $G$ is connected and we fix a $G$-invariant locally closed submanifold $X$ of $Z.$ From now on, we denote the restriction of $\mu_\mathfrak{p}$ to $X$ by $\mu_\mathfrak{p}.$ Then
$$
\text{grad}\mu_\mathfrak{p}^\beta = \beta_X,
$$ where grad is computed with respect to the induced Riemannian metric on $X.$

Let $\beta \in \mathfrak{p}$. It is well-known that $G^\beta$ is compatible and
\[
G^\beta=K^\beta \text{exp}  (\liep^\beta),
\]
where $K^\beta=K\cap G^\beta=\{g\in K:\, \mathrm{Ad}(g)(\beta)=\beta\}$ and
$\liep^\beta=\{v\in \liep:\, [v,\beta]=0\}$ (see \cite{knapp-beyond}).
\begin{corollary} \label{slice-cor} If $x \in X$ and $\mup(x) = \beta$,
  there are a $G^\beta$-invariant decomposition $T_xX = \lieg^\beta
  \cd x \, \oplus W$, open $G^\beta$-invariant subsets $S \subset W$,
  $\Omega \subset X$ and a $G^\beta$-equivariant diffeomorphism $\Psi
  : G^\beta \times^{G_x}S \ra \Omega$, such that $0\in S, x\in \Omega$
  and $\Psi ([e, 0]) =x$.
\end{corollary}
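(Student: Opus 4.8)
The plan is to reduce the assertion to the slice theorem at a zero of the gradient map, proved in \cite{heinzner-schwarz-stoetzel} for a compatible group, applied here to $G^\beta$; the only genuine content is a shifting argument. First I would record that $\beta$ is central in $\lieg^\beta$: since $\lieg^\beta=\{v\in\lieg:[v,\beta]=0\}$, the element $\beta$ commutes with every element of $\lieg^\beta$ and in particular $\beta\in\liep^\beta$. Consequently the fundamental vector field $\beta_X$ is $G^\beta$-invariant, because $g_*\beta_X=(\Ad(g)\beta)_X=\beta_X$ for every $g\in G^\beta$. This is the infinitesimal reflection of the fact that $G^\beta$ is precisely the subgroup adapted to the value $\beta$.

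Next I would pass to the gradient map of $G^\beta$. Since $G^\beta$ is compatible with Cartan decomposition $G^\beta=K^\beta\exp(\liep^\beta)$, its associated gradient map is $\mu_{\liep^\beta}=\pi_{\liep^\beta}\circ\mup:X\to\liep^\beta$, where $\pi_{\liep^\beta}$ denotes the orthogonal projection onto $\liep^\beta$. Because $\mup(x)=\beta\in\liep^\beta$, we obtain $\mu_{\liep^\beta}(x)=\pi_{\liep^\beta}(\beta)=\beta$. Now $\beta$ is fixed by $\Ad(K^\beta)$ and central in $\lieg^\beta$, so subtracting the constant $\beta$ leaves the defining relation $\operatorname{grad}\langle\mu_{\liep^\beta},v\rangle=v_X$ for $v\in\liep^\beta$ unchanged and preserves $K^\beta$-equivariance. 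Hence $\nu:=\mu_{\liep^\beta}-\beta$ is again a gradient map for the $G^\beta$-action on $X$, and by construction $\nu(x)=0$.

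Finally I would invoke the slice theorem at a zero of the gradient map from \cite{heinzner-schwarz-stoetzel}, applied to the compatible group $G^\beta$ with its gradient map $\nu$ at the point $x\in\nu^{-1}(0)$. This yields that $G_x=(G^\beta)_x$ is a compatible (hence reductive) subgroup, that the isotropy representation on $T_xX$ splits $G_x$-invariantly as $T_xX=\lieg^\beta\cd x\oplus W$ with $W$ a $G_x$-invariant complement of the orbit tangent space, and that the associated exponential tube produces open $G^\beta$-invariant sets $S\subset W$ and $\Omega\subset X$ with $0\in S$, $x\in\Omega$, together with a $G^\beta$-equivariant diffeomorphism $\Psi:G^\beta\times^{G_x}S\to\Omega$ satisfying $\Psi([e,0])=x$. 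This is exactly the claimed conclusion.

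The main obstacle is the slice theorem itself: for the non-compact reductive group $G^\beta$ the tube map fails to be a diffeomorphism at a generic point, and it is precisely the hypothesis $\nu(x)=0$ that forces the $G^\beta$-orbit of $x$ to be closed and the transverse $\exp(\liep^\beta)$-directions to behave properly, so that the tube theorem applies. Thus the whole weight of the argument is carried by the reduction from $\mup(x)=\beta$ to $\nu(x)=0$, which is legitimate exactly because $\beta$ is central in $\lieg^\beta$.
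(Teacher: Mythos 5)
Your argument is correct and coincides with the paper's own proof, which consists entirely of the citation to \cite[p.~169]{heinzner-schwarz-stoetzel}: the corollary there is obtained by exactly your reduction---since $\beta$ is central in $\lieg^\beta$ and $\mathrm{Ad}(K^\beta)$-fixed, the shifted map $\nu=\mu_{\liep^\beta}-\beta=\pi_{\liep^\beta}\circ\mup-\beta$ is again a $K^\beta$-equivariant gradient map for the compatible group $G^\beta$ vanishing at $x$, so the slice theorem at a zero of the gradient map applies. The one inaccuracy is your closing heuristic that $\nu(x)=0$ forces $G^\beta\cdot x$ to be closed in $X$: a zero of the gradient map only guarantees the orbit is closed in the set of semistable points (for $\C^*$ acting on $\PP^1$ the zero level of the momentum map is the equator, whose orbit is not closed in $\PP^1$), but since this remark is only offered as motivation for the cited slice theorem and is not a step of the derivation, it does not affect the validity of your proof.
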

\begin{proof}
See \cite[p. 169]{heinzner-schwarz-stoetzel}.
\end{proof}
$\beta_X$ is a vector field on
  $X$, i.e. a section of $TX$. For $x\in X$, the differential is a map
  $T_x X \ra T_{\beta_X(x)}(TX)$. If $\beta_X(x) =0$, there is a
  canonical splitting $T_{\beta_X(x)}(TX) = T_x X \oplus
  T_x X$. Accordingly $d\beta_X(x)$ splits into a horizontal and a
vertical part. The horizontal part is the identity map. We denote the
vertical part by $d\beta_X(x)$.  It belongs to $\End(T_xX)$.  Let
$\{\phi_t=\text{exp} (t\beta)\} $ be the flow of $\beta_X$.  There
  is a corresponding flow on $TX$. Since $\phi_t(x)=x$, the flow on
  $TX$ preserves $T_xX$ and there it is given by $d\phi_t(x) \in
  \Gl(T_xX)$.  Thus we get a linear $\R$-action on $T_x X$ with
  infinitesimal generator $d\beta_X(x) $.
\begin{corollary}
  \label{slice-cor-2}
  If $\beta \in \liep $ and $x \in X$ is a critical point of $\mupb$,
  then there are open invariant neighbourhoods $S \subset T_x X$ and
  $\Omega \subset X$ and an $\R$-equivariant diffeomorphism $\Psi : S
  \ra \Omega$, such that $0\in S, x\in \Omega$, $\Psi ( 0) =x$. (Here
  $t\in \R$ acts as $d\phi_t(x)$ on $S$ and as $\phi_t$ on $\Omega$.)
\end{corollary}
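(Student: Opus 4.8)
The plan is to recognise the flow $\phi_t=\exp(t\beta)$ as the action of a one-parameter compatible subgroup and to apply the slice theorem at a fixed point of that subgroup. First I would note that, since $\operatorname{grad}\mupb=\beta_X$, the point $x$ is a critical point of $\mupb$ precisely when $\beta_X(x)=0$; as $\beta_X$ generates $\phi_t$, the integral curve of $\beta_X$ through $x$ is then constant, so $\phi_t(x)=x$ for all $t$. Writing $\lia:=\R\beta$ and $A:=\exp(\lia)$, this says exactly that $x$ is a fixed point of $A$. (If $\beta=0$ the statement is trivial, so assume $\beta\neq0$.)

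Next I would verify that $A$ is a compatible subgroup of $U^\C$. Its Lie algebra is $\lia\subset\liep\subset i\liu$, so $\liek_A=\lia\cap\liu=0$ and $\liep_A=\lia$; consequently $K_A=A\cap U=\{e\}$ and the map $K_A\times\liep_A\to A$, $(e,v)\mapsto\exp(v)$, is a diffeomorphism onto the closed subgroup $A\cong\R$. Hence $A$ is compatible and the slice theorem of \cite{heinzner-schwarz-stoetzel} underlying Corollary \ref{slice-cor} is available for the $A$-action at $x$. For the abelian group $A$ the Levi-type factor $A^{\gamma}=\{a\in A:\Ad(a)\gamma=\gamma\}$ equals $A$ for every $\gamma\in\lia$, so the group playing the role of $G^\beta$ in Corollary \ref{slice-cor} is $A$ itself and no condition on the value $\mu_\lia(x)$ is needed.

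I would then specialise the slice theorem to the fixed point $x$. Since $x$ is $A$-fixed we have $A_x=A$ and $\lia\cd x=\{t\beta_X(x):t\in\R\}=0$, so the slice splitting $T_xX=\lia\cd x\oplus W$ degenerates to $W=T_xX$, and the associated bundle trivialises, $A\times^{A_x}S=A\times^{A}S\cong S$ $A$-equivariantly, the residual action of $A$ on $S\subset T_xX$ being the isotropy representation: $a\in A$ acts on $v\in T_xX$ by the differential at $x$ of $y\mapsto a\cd y$. For $a=\phi_t$ this differential is $d\phi_t(x)$, whose infinitesimal generator is the vertical part $d\beta_X(x)\in\End(T_xX)$ introduced in the paragraph preceding the statement. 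Transporting the $A$-equivariant diffeomorphism through these identifications produces open invariant sets $S\subset T_xX$ and $\Omega\subset X$ with $0\in S$, $x\in\Omega$, $\Psi(0)=x$, and $\Psi$ intertwining the linear action $d\phi_t(x)$ on $S$ with $\phi_t$ on $\Omega$; since $A\cong\R$ this is exactly the claimed linearisation.

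The main obstacle I expect is the pair of bookkeeping identifications that make this reduction rigorous: confirming that $A=\exp(\R\beta)$ is genuinely compatible, so that the Heinzner--Schwarz--St\"{o}tzel slice theorem applies to it, and matching the slice (isotropy) representation it provides with the vertical differential $d\phi_t(x)$ described before the statement. Once these are settled, the trivialisation $A\times^{A}S\cong S$ at a fixed point and the identification $A\cong\R$ are routine.
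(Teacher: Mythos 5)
Your proof is correct and is essentially the argument the paper delegates to \cite{heinzner-schwarz-stoetzel}: the paper's own ``proof'' is only a citation, and the linearization there is obtained exactly as you describe, by invoking the slice theorem quoted as Corollary \ref{slice-cor} (which holds for an arbitrary compatible subgroup, here the abelian group $A=\exp(\R\beta)$ with gradient map $\mu_{\lia}=\pi\circ\mup$, a setting the paper itself sets up in the introduction) at a point where $A^{\gamma}=A$ and $A_x=A$, so that $A\times^{A_x}S$ collapses to $S$ with the isotropy action $t\mapsto d\phi_t(x)$. Your two bookkeeping checks --- that $A$ is closed and compatible, and that the slice representation at a fixed point is precisely $d\phi_t(x)$, with infinitesimal generator the vertical part $d\beta_X(x)$ --- are exactly the points that need verification, and you handle them correctly.
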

\begin{proof}
See \cite{heinzner-schwarz-stoetzel}.
\end{proof}
Let $x \in
\Crit(\mu^\beta_\liep)$. Let $D^2\mup^\beta(x) $ denote the Hessian,
which is a symmetric operator on $T_xX$ such that
\begin{gather*}
  ( D^2 \mup^\beta(x) v, v) = \frac{\mathrm d^2}{\mathrm
    dt^2} 
  (\mup^\beta\circ \ga)(0)
\end{gather*}
where $\ga$ is a smooth curve, $\ga(0) = x$ and $ \dot{\ga}(0)=v$.
Denote by $V_-$ (respectively $V_+$) the sum of the eigenspaces of the
Hessian of $\mupb$ corresponding to negative (resp. positive)
eigenvalues. Denote by $V_0$ the kernel.  Since the Hessian is
symmetric we get an orthogonal decomposition
\begin{gather}
  \label{Dec-tangente}
  T_xX = V_- \oplus V_0 \oplus V_+.
\end{gather}
Let $\alfa : G \ra X$ be the orbit map: $\alfa(g) :=gx$.  The
differential $d\alfa_e$ is the map $\xi \mapsto \xi_X(x)$.
\begin{proposition}
    \label{tangent}
  If $\beta \in \liep$ and $x \in \Crit(\mu^\beta_\liep)$ then
  \begin{gather*}
    D^2\mup^\beta(x) = d \beta_X(x).
  \end{gather*}
  Moreover $d\alfa_e (\lier^{\beta\pm} ) \subset V_\pm$ and $d\alfa_e(
  \lieg^\beta) \subset V_0$.  If $X$ is $G$-homogeneous these are
  equalities.
\end{proposition}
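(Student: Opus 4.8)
The plan is to treat the Hessian identity, the intertwining relation, and the eigenspace inclusions in turn, the intertwining relation being the heart of the argument.

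First I would identify the Hessian of $\mupb$ at the critical point $x$ with the linearization of its gradient. By definition $(D^2\mupb(x)v,v)=\frac{d^2}{dt^2}(\mupb\circ\ga)(0)$ for a curve $\ga$ with $\ga(0)=x$, $\dot\ga(0)=v$. Since $\mathrm{grad}\,\mupb=\beta_X$, differentiating $\frac{d}{dt}(\mupb\circ\ga)(t)=(\beta_X(\ga(t)),\dot\ga(t))$ once more at $t=0$ and using compatibility of the Levi--Civita connection with the metric, the term involving $\beta_X(x)$ drops out because $\beta_X(x)=0$, leaving $(D^2\mupb(x)v,v)=(\nabla_v\beta_X,v)$. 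At a zero of a vector field the covariant derivative $\nabla_v\beta_X$ agrees with the ordinary directional derivative, i.e. with the vertical part $d\beta_X(x)v$ from the canonical splitting $T_{\beta_X(x)}(TX)=T_xX\oplus T_xX$; since both operators are symmetric this gives $D^2\mupb(x)=d\beta_X(x)$.

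The key step is the intertwining identity $d\beta_X(x)\circ d\alfa_e = d\alfa_e\circ\ad\beta$. The flow of $\beta_X$ is $\phi_t=\exp(t\beta)$, and its linearization at the fixed point $x$ is the $\R$-action $d\phi_t(x)\in\Gl(T_xX)$ with generator $d\beta_X(x)$. For $\xi\in\lieg$, using $\phi_t(x)=x$ together with $\exp(t\beta)\exp(s\xi)\exp(-t\beta)=\exp(s\,\Ad(\exp(t\beta))\xi)$, I would compute
\[
d\phi_t(x)\bigl(\xi_X(x)\bigr)=\frac{d}{ds}\Big|_{s=0}\exp(t\beta)\exp(s\xi)x=\bigl(\Ad(\exp(t\beta))\xi\bigr)_X(x),
\]
and then differentiate at $t=0$. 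By linearity of $\eta\mapsto\eta_X(x)=d\alfa_e(\eta)$ this yields $d\beta_X(x)(\xi_X(x))=([\beta,\xi])_X(x)=d\alfa_e(\ad\beta\,\xi)$, which is the claimed intertwining.

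The inclusions then follow by matching eigenspaces. If $\xi$ is a $\lambda$-eigenvector of $\ad\beta$, the identity shows $d\alfa_e(\xi)$ is a $\lambda$-eigenvector of $d\beta_X(x)=D^2\mupb(x)$ or is zero; hence $d\alfa_e(\lier^{\beta\pm})\subset V_\pm$ and $d\alfa_e(\lieg^\beta)=d\alfa_e(\ker\ad\beta)\subset V_0$. If $X$ is $G$-homogeneous, then $d\alfa_e$ is onto $T_xX=V_-\oplus V_0\oplus V_+$; combining this with the decomposition $\lieg=\lier^{\beta-}\oplus\lieg^\beta\oplus\lier^{\beta+}$ and the inclusions just obtained, projecting onto each summand forces the three inclusions to be equalities. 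I expect the main obstacle to be the flow computation of the third paragraph: the crucial point is that $x$ being fixed by $\phi_t$ collapses the conjugation in $\exp(t\beta)\exp(s\xi)x$, converting the adjoint action of $\exp(t\beta)$ on $\lieg$ into the linearized geometric flow on $T_xX$; the Hessian identity is standard and the homogeneous case is then a formal consequence.
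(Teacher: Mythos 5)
Your proposal is correct and follows essentially the same route as the paper, whose own ``proof'' is only a citation to \cite[Prop. 2.5]{heinzner-schwarz-stoetzel} and \cite{LA}: the argument given there is precisely yours, namely identifying $D^2\mupb(x)$ with the linearization $d\beta_X(x)$ of the gradient field at its zero, using equivariance of fundamental vector fields together with $\phi_t(x)=x$ to obtain the intertwining $d\beta_X(x)\circ d\alfa_e = d\alfa_e\circ \ad \beta$, and then matching eigenspaces of $\ad\beta$ with those of the Hessian. The surjectivity of $d\alfa_e$ in the homogeneous case, combined with the decomposition $\lieg=\lier^{\beta-}\oplus\lieg^\beta\oplus\lier^{\beta+}$, upgrades the inclusions to equalities exactly as you say, so your write-up supplies correctly the details the paper delegates to the references.
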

\begin{proof}
See \cite[Prop. 2.5]{heinzner-schwarz-stoetzel} and \cite{LA}.
\end{proof}
\begin{corollary}
  \label{MorseBott}
  For every $\beta \in \liep$, $\mupb$ is a Morse-Bott function.
\end{corollary}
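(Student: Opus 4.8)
The plan is to verify the two defining requirements of the Morse-Bott condition at every $x\in\Crit(\mupb)$: that $\Crit(\mupb)$ is, near $x$, a smooth submanifold, and that the tangent space to this submanifold at $x$ coincides with the kernel $V_0$ of the Hessian $D^2\mup^\beta(x)$, so that the Hessian is nondegenerate in the normal directions. First I would record that, since $\operatorname{grad}\mupb=\beta_X$, the critical points of $\mupb$ are exactly the zeros $\Zero(\beta_X)$ of the vector field $\beta_X$, equivalently the points of $X$ fixed by the flow $\phi_t=\exp(t\beta)$.

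Next, fixing $x\in\Crit(\mupb)$, I would invoke Corollary \ref{slice-cor-2} to obtain the $\R$-equivariant diffeomorphism $\Psi:S\ra\Omega$ from an open neighbourhood $S$ of $0$ in $T_xX$ onto a neighbourhood $\Omega$ of $x$, where $\R$ acts on $S$ linearly through $d\phi_t(x)=\exp\bigl(t\,d\beta_X(x)\bigr)$ and on $\Omega$ through $\phi_t$. Because $\Psi$ intertwines these two actions, it carries the fixed-point set of the linear flow on $S$ bijectively onto $\Crit(\mupb)\cap\Omega$. The fixed points of the linear flow are precisely $\ker\bigl(d\beta_X(x)\bigr)\cap S$, and by Proposition \ref{tangent} we have $d\beta_X(x)=D^2\mup^\beta(x)$, whose kernel is $V_0$. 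Hence $\Crit(\mupb)\cap\Omega=\Psi(V_0\cap S)$ is a smooth submanifold of $\Omega$ with $T_x\Crit(\mupb)=V_0$.

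This local description yields both requirements simultaneously. The critical set is, near each of its points, a smooth submanifold (whose connected components may have different dimensions, which the Morse-Bott condition permits), and $T_x\Crit(\mupb)=V_0=\ker D^2\mup^\beta(x)$ at every such point. Consequently the Hessian $D^2\mup^\beta(x)$ restricts to a nondegenerate symmetric form on the normal space $V_-\oplus V_+$, directly from the definition of $V_\pm$ and $V_0$ in the orthogonal decomposition \eqref{Dec-tangente}. This is exactly the assertion that $\mupb$ is Morse-Bott.

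The only delicate point I expect is the identification $\Crit(\mupb)\cap\Omega=\Psi(V_0\cap S)$: one must check that $\R$-equivariance of $\Psi$ forces the two fixed-point sets to correspond, and that the fixed-point set of a linear flow $\exp(tA)$ generated by a symmetric operator $A$ is exactly $\ker A$. Both are routine once the linear slice model of Corollary \ref{slice-cor-2} and the identity $D^2\mup^\beta(x)=d\beta_X(x)$ of Proposition \ref{tangent} are in hand, so the real content of the argument is the reduction to this linearized picture rather than any further computation.
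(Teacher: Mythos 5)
Your proof is correct and follows exactly the route the paper intends: the corollary is stated without proof as an immediate consequence of Corollary \ref{slice-cor-2} and Proposition \ref{tangent}, namely that the linearization $\Psi$ identifies $\Crit(\mupb)$ near $x$ with $\ker d\beta_X(x)=\ker D^2\mupb(x)=V_0$, so the critical set is a submanifold whose tangent space is the kernel of the Hessian. Your write-up simply makes explicit the details the paper leaves to the reader.
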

Using an $\mathrm{Ad}(K)$-invariant  inner product of  $\liep$, we define $\nu_\liep (z):=\frac{1}{2}\parallel \mup(z)\parallel^2$. The function $\nu_\liep$ is $K$-invariant and it is called \emph{the norm square function}.
The following result is proved in \cite{heinzner-schwarz-stoetzel} (see Corollary 6.11 and Corollary 6.12 p. $21$).
\begin{proposition}\label{heinzner-maximun}
Let $x\in M$. Then:
\begin{itemize}
\item if $\nu_\liep$ restricted to $G\cdot x$ has a local maximum at $x$, then $G\cdot x=K\cdot x$
\item if $G\cdot x$ is compact, then $G\cdot x =K\cdot x$
\end{itemize}
\end{proposition}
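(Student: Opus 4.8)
The plan is to establish the first assertion by a second--variation analysis at $x$ and then to reduce the second assertion to it. Throughout set $\beta:=\mup(x)\in\liep$. From the defining relation $\operatorname{grad}\mu_\liep^{\ga}=\ga_X$ and the chain rule one first computes that the gradient of the norm square function is $\operatorname{grad}\nu_\liep(z)=(\mup(z))_X(z)$, which is everywhere tangent to the $G$--orbit through $z$. Hence, along the curve $c_\zeta(t):=\exp(t\zeta)\cd x$ (with $\zeta\in\lieg$), the function $t\mapsto\nu_\liep(c_\zeta(t))$ has a local maximum at $t=0$. Taking $\zeta=\beta$ and reading off the first derivative $\|\beta_X(x)\|^2$ forces $\beta_X(x)=0$; thus $x\in\Crit(\mupb)$ and Proposition \ref{tangent} applies, giving the eigenspace structure $d\alfa_e(\lier^{\beta\pm})\subset V_\pm$, $d\alfa_e(\lieg^\beta)\subset V_0$ of \eqref{Dec-tangente}.

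Next I would compute the second variation. Since $x\in\Crit(\mupb)$, differentiating $\nu_\liep(c_\zeta(t))=\tfrac12\|\mup(c_\zeta(t))\|^2$ twice at $t=0$ gives
\[
\frac{\mathrm d^2}{\mathrm dt^2}\Big|_{t=0}\nu_\liep(c_\zeta(t))=(D^2\mupb(x)\zeta_X(x),\zeta_X(x))+\|d\mup(x)\zeta_X(x)\|^2,
\]
which the local maximum forces to be $\le 0$ for every $\zeta\in\lieg$. For $\zeta\in\lier^{\beta+}$ we have $\zeta_X(x)\in V_+$, so the first term is strictly positive unless $\zeta_X(x)=0$; the second being non--negative, we conclude $\lier^{\beta+}\cd x=0$. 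For $\zeta=\eta\in\liep^\beta\subset\lieg^\beta$ instead $\eta_X(x)\in V_0$, so the first term vanishes and $\le 0$ forces $\|d\mup(x)\eta_X(x)\|^2=0$; pairing with $\eta$ via the identity $\langle d\mup(x)w,\ga\rangle=(\ga_X(x),w)$ (a direct consequence of $\operatorname{grad}\mu_\liep^{\ga}=\ga_X$) then yields $\|\eta_X(x)\|^2=0$, so that $\liep^\beta\cd x=0$ as well.

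It remains to turn these two facts into the inclusion $\liep\cd x\subset\liek\cd x$, which is the crux. Here I would use the Cartan involution $\theta$ (equal to $+1$ on $\liek$ and $-1$ on $\liep$): since $\theta(\beta)=-\beta$ it interchanges the $\ad\beta$--eigenspaces, and a short computation shows that every vector of $\liep$ lies either in $\liep^\beta$ or is of the form $\tfrac12(e-\theta e)$ with $e\in\lier^{\beta+}$. Writing $e=\tfrac12(e+\theta e)+\tfrac12(e-\theta e)$, with $\tfrac12(e+\theta e)\in\liek$, and using $e_X(x)=0$ gives $\big(\tfrac12(e-\theta e)\big)_X(x)=-\big(\tfrac12(e+\theta e)\big)_X(x)\in\liek\cd x$; together with $\liep^\beta\cd x=0$ this proves $\liep\cd x\subset\liek\cd x$, hence $\lieg\cd x=\liek\cd x$ and $\dim G\cd x=\dim K\cd x$. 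As $K\cd x$ is compact, it is closed in the connected manifold $G\cd x$ and of full dimension, hence also open, so $G\cd x=K\cd x$.

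Finally, for the second statement I would note that if $G\cd x$ is compact then $\nu_\liep$ attains a maximum on $G\cd x$ at some $x_0$; the first part gives $G\cd x=G\cd x_0=K\cd x_0$, and since $x\in K\cd x_0$ we obtain $K\cd x=K\cd x_0=G\cd x$. The main obstacle is the middle paragraph: the second--variation test is inconclusive precisely in the directions $\zeta\in\lier^{\beta-}$, where $D^2\mupb$ is negative, so one cannot read the conclusion off the Hessian alone. The device of first killing $\lier^{\beta+}\cd x$ (where the test does bite) and then transporting this through $\theta$ to the remaining $\liep$--directions, combined with the auxiliary positivity of $\|d\mup(x)\cd\|^2$ on the degenerate part $V_0$, is what circumvents the difficulty.
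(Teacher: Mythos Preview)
The paper does not prove this proposition; it merely cites Corollaries~6.11 and~6.12 of \cite{heinzner-schwarz-stoetzel}. So there is no ``paper's own proof'' to compare against here. Your argument is essentially correct and is in the spirit of the original Heinzner--Schwarz--St\"otzel proof: identify $\beta=\mup(x)$, use the first variation to get $\beta_X(x)=0$, then use the second variation
\[
\frac{\mathrm d^2}{\mathrm dt^2}\Big|_{t=0}\nu_\liep(c_\zeta(t))=(D^2\mupb(x)\zeta_X(x),\zeta_X(x))+\|d\mup(x)\zeta_X(x)\|^2\le 0
\]
together with Proposition~\ref{tangent} to kill $\lier^{\beta+}\cd x$ and $\liep^\beta\cd x$, and finally transport via the Cartan involution $\theta$ to conclude $\liep\cd x\subset\liek\cd x$.

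One small wording fix: the sentence ``every vector of $\liep$ lies either in $\liep^\beta$ or is of the form $\tfrac12(e-\theta e)$'' is not literally true; what you mean (and what you actually use) is the decomposition $\liep=\liep^\beta\oplus\{\tfrac12(e-\theta e):e\in\lier^{\beta+}\}$, i.e.\ every $v\in\liep$ is a \emph{sum} of such terms. The computation that follows is fine once this is stated correctly.
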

A strategy to analyzing the $G$ action on $M$ is to view $\nu_\liep$ as generalized Morse function. In \cite{heinzner-schwarz-stoetzel} the authors proved the existence of a smooth $G$-invariant stratification of $M$ and they studied its properties.
\section{Closed Orbit of Parabolic Subgroups}
Let $(Z,\omega)$ be a \Keler manifold and $U^\C$ acts holomorphically on $Z$ with a momentum map $\mu : Z \to \mathfrak{u}.$ Let $G\subset U^\C$ be a closed compatible subgroup. $G = K\text{exp}(\mathfrak{p}),$ where $K := G\cap U$ is a maximal compact subgroup of $G$ and $\mathfrak{p} := \mathfrak{g}\cap \text{i}\mathfrak{u};$ $\mathfrak{g}$ is the Lie algebra of $G.$

\textbf{Remarks:} Suppose $X\subset Z$ is $G$-stable locally closed real submanifold of $Z$ with the gradient map $\mu_\mathfrak{p} : X\to \mathfrak{p}.$ Let $Q \subset G$ be a parabolic subgroup. The following facts are easy to check:

\begin{enumerate}
    \item If $Q\cdot p$ is compact, then $G\cdot p$ is closed since $G = KQ;$

    \item let $\OO$ be a compact $G$-orbit. By Proposition \ref{heinzner-maximun}, it follows that $\OO = G\cdot p = K\cdot p$. Since $\mup$ is $K$-equivariant, the restricted gradient map  $\mu_\mathfrak{p} : K\cdot p \to K\cdot \mu_\mathfrak{p}(x)$ is a smooth $K$-equivariant submersion.
\end{enumerate}
Let $\beta \in \mathfrak{p}$ and let $$Y = \{z\in X : \mathrm{max}_{x\in X}\mu_\mathfrak{p}^\beta(x) = \mu_\mathfrak{p}^\beta(z)\}.$$ Assume that $Y$ is not empty. By Corollary \ref{slice-cor-2}, $Y$ is a smooth, possibly disconnected, submanifold of $X$.
\begin{lemma}\label{lemmm}
$Y$ is $G^{\beta +}$ invariant.
\end{lemma}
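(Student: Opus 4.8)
The plan is to exploit the Levi decomposition $G^{\beta+}=G^\beta R^{\beta+}$ and to check separately that each of the two factors maps $Y$ into itself. The starting point is that every $z\in Y$ is a global maximum of $\mupb$ on $X$, hence a critical point; thus $\beta_X(z)=\operatorname{grad}\mupb(z)=0$, so the gradient flow $\exp(t\beta)$ fixes $z$ for all $t$. This single observation drives both halves of the argument.

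For the unipotent radical $R^{\beta+}$ I would show that it fixes $Y$ pointwise. Since $z\in Y$ is a global maximum, the Hessian $D^2\mupb(z)$ is negative semidefinite, so in the decomposition \eqref{Dec-tangente} one has $V_+=0$. By Proposition \ref{tangent}, $d\alpha_e(\lier^{\beta+})\subseteq V_+=0$, i.e. $\xi_X(z)=0$ for every $\xi\in\lier^{\beta+}$. Because $R^{\beta+}$ is connected with Lie algebra $\lier^{\beta+}$, the stabilizer of $z$ in $R^{\beta+}$ is open and hence all of $R^{\beta+}$, so $R^{\beta+}$ fixes $z$; as $z\in Y$ was arbitrary, $R^{\beta+}\cdot Y=Y$.

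For the Levi factor I would use $G^\beta=K^\beta\exp(\liep^\beta)$ and handle the two pieces in turn. The compact part is immediate from $K$-equivariance: for $k\in K^\beta$ one has $\mupb(kz)=\sx\Ad(k)\mup(z),\beta\xs=\sx\mup(z),\Ad(k^{-1})\beta\xs=\mupb(z)$ since $\Ad(k)\beta=\beta$, so $k$ sends global maxima to global maxima. For $\exp(\liep^\beta)$, fix $v\in\liep^\beta$ and $z\in Y$ and look at the curve $c(s)=\exp(sv)z$. As $[v,\beta]=0$, the element $\exp(sv)$ lies in $G^\beta$ and commutes with $\exp(t\beta)$; since $\exp(t\beta)$ fixes $z$ it also fixes $c(s)$, so each $c(s)$ is again a critical point and $\beta_X(c(s))=0$. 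Hence $\tfrac{d}{ds}\mupb(c(s))=(\operatorname{grad}\mupb(c(s)),v_X(c(s)))=(\beta_X(c(s)),v_X(c(s)))=0$, so $\mupb$ is constant along $c$, equal to $\max_{x\in X}\mupb(x)$, and $c(s)\in Y$ for all $s$. Since $\exp\colon\liep^\beta\to\exp(\liep^\beta)$ is surjective, $\exp(\liep^\beta)$ preserves $Y$, and therefore so does $G^\beta=K^\beta\exp(\liep^\beta)$.

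Putting the pieces together, any $q\in G^{\beta+}=G^\beta R^{\beta+}$ factors as $q=g r$ with $g\in G^\beta$ and $r\in R^{\beta+}$, and since both factors preserve $Y$ we obtain $q\cdot Y=Y$. I expect the genuine obstacle to be the unipotent part $R^{\beta+}$: because $\mup$ is only $K$-equivariant and not $G$-equivariant, invariance under $R^{\beta+}$ cannot be read off from equivariance of the gradient map and must instead be extracted from the second-order behaviour at the maximum, i.e. from the sign of the Hessian combined with the eigenspace description in Proposition \ref{tangent}. The remaining delicacy is the $\exp(\liep^\beta)$ step, where the key is to notice that the entire $\exp(\liep^\beta)$-orbit of a point of $Y$ stays critical, which is precisely what forces the derivative of $\mupb$ along it to vanish.
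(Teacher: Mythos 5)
Your proof is correct and follows essentially the same route as the paper's: the same Levi decomposition $G^{\beta+}=G^{\beta}R^{\beta+}$, with $K^{\beta}$-invariance coming from $K$-equivariance of $\mup$, the commuting-flow argument along $\exp(\liep^{\beta})$-orbits, and Proposition \ref{tangent} combined with negative semidefiniteness of the Hessian at a maximum to conclude that $R^{\beta+}$ fixes $Y$ pointwise. If anything, you make explicit two steps the paper leaves implicit, namely why $\beta_X$ vanishes along the curve $\exp(s v)z$ and why $V_+=0$ in the decomposition \eqref{Dec-tangente}.
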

\begin{proof}

Let $g\in G$ and let $\xi \in \liep$. It is easy to check that
\[
(\mathrm{d} g)_p (\xi_X)=(\mathrm{Ad}(g)(\xi) )_X (gp ),
\]
and so $G^\beta$ preserves $X^\beta$. We claim that $Y$ is $G^\beta$-stable. In fact, $G^\beta = K^\beta\text{exp}(\mathfrak{p}^\beta)$ and $Y$ is $K^\beta$ invariant by $K$-invariant property of the gradient map. For each $y\in Y,$ let $\xi\in \liep^\beta$ and let $\gamma(t)= \text{exp}(t\xi)\cdot y.$ Since $\beta_X (\gamma(t))=0$ it follows that $\mup^\beta (\gamma(t) )$ is constant and so $\text{exp} (t\xi)\cdot y\in Y.$ Now, $G^{\beta+} = G^\beta R^{\beta+}$ where $R^{\beta+}$ is connected and then unipotent radical of $G^{\beta+}$. By Proposition \ref{tangent}, $\lier^{\beta+} \subset V_+$ and so $\lier^{\beta+}\cdot z \subset G_z$ for all $z\in Y.$ Since $R^{\beta+}$ is connected, this implies $R^{\beta+}$ does not act on $Y$ and the result follows.
\end{proof}
\begin{lemma}\label{gm}
Let $\OO$ be a compact $G$-orbit. Let $\beta \in \mathfrak{p}.$ If $x\in \OO$ is a local maximum of $\mu_\mathfrak{p}^\beta : \OO \to \mathbb{R},$ then $x$ is a global maximum of $\mu_\mathfrak{p}^\beta : \OO \to \mathbb{R}.$
\end{lemma}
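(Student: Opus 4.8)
The plan is to reduce the statement to a convexity fact about a single $K$-orbit in $\liep$, and then to read off maximality from the restricted-root structure already encoded in Proposition \ref{tangent}.

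First I would reduce to the image of the gradient map. Since $\OO$ is compact, Proposition \ref{heinzner-maximun} gives $\OO=K\cdot x$, and by the Remarks the restricted gradient map is a $K$-equivariant submersion $\mup:\OO\to N:=K\cdot\mup(x)\subset\liep$ onto a compact $K$-orbit. Because $\mupb=\sx\mup(\cdot),\beta\xs$ and a submersion is an open map, a local maximum $x$ of $\mupb$ on $\OO$ is carried to a local maximum $\eta:=\mup(x)$ of the linear function $f(\cdot)=\sx\cdot,\beta\xs$ on $N$, with the same value, whereas $\max_N f=\max_\OO\mupb$. Hence it suffices to prove that a local maximum of $f$ on the compact $K$-orbit $N$ is a global maximum.

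Next I would extract the infinitesimal content of ``local maximum''. A local maximum is in particular a critical point, so $\beta_X(x)=0$ and the Hessian $D^2\mupb(x)=d\beta_X(x)$ is negative semidefinite; hence $V_+=0$ in the decomposition \eqref{Dec-tangente}. Since $\OO$ is $G$-homogeneous, the inclusions of Proposition \ref{tangent} are equalities, so $V_+=d\alfa_e(\lier^{\beta+})=0$, i.e. $\lier^{\beta+}\subset\lieg_x$ and $R^{\beta+}$ fixes $x$. Translated to $N$, this is exactly the critical condition $[\eta,\beta]=0$ together with $\sx(\ad\xi)^2\eta,\beta\xs=-\sx[\xi,\eta],[\xi,\beta]\xs\le0$ for all $\xi\in\liek$, using that the inner product is $\Ad(K)$-invariant.

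The last and hardest step is to turn ``$R^{\beta+}$ fixes $x$'' into ``$f(\eta)=\max_N f$''. Here one cannot appeal to Morse theory alone: in the real setting $\mathrm{max}_{\OO}(\beta)$ need not be connected, so a generic Morse--Bott function may genuinely have local maxima below its maximum, and the conclusion must use the orbit structure. The route I would take is to choose a maximal abelian $\lia\subset\liep$ with $\beta\in\lia$, fix an ordering of the restricted roots making $\beta$ dominant, and argue by contraposition: if $\eta$ is not $\beta$-maximal, then a rearrangement/$\mathfrak{sl}_2$ argument along a positive restricted root $\alpha$ with $\alpha(\beta)>0$ produces a tangent direction lying in $d\alfa_e(\lier^{\beta+})$ along which $f$ strictly increases to second order, contradicting $V_+=0$. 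Convex-geometrically this is packaged by Kostant's theorem \cite{kostant-convexity}: after projecting, $\mu_{\lia}(\OO)=\pi_\lia(N)$ is the convex hull of the Weyl-group orbit $W(\lieg,\lia)\cdot\pi_\lia(\eta)$, and the critical values of $f$ on $N$ are the pairings $\sx w\cdot\pi_\lia(\eta),\beta\xs$, among which only the top one can occur at a local maximum. The main obstacle is precisely this final implication, namely ruling out local maxima strictly below the top, where the restricted-root (equivalently the Kostant convexity) input is indispensable; once it is in place, $f(\eta)=\max_N f$ and therefore $x$ is a global maximum of $\mupb$ on $\OO$.
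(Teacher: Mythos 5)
Your proposal is correct, and its first step is exactly the paper's: use Proposition \ref{heinzner-maximun} to write $\OO=K\cdot x$, then transport the problem through the $K$-equivariant (submersive, as recorded in Remark (2)) map $\mup:\OO\to K\cdot\mup(x)\subset\liep$, so that everything reduces to showing that a local maximum of the height function $\sx\cdot,\beta\xs$ on the orbit $K\cdot\mup(x)$ is global. Where you diverge is in how this key fact is handled: the paper simply cites it, as something ``noted in the proof of Proposition 3.9'' of \cite{LA}, and the proof ends there; you instead open up that black box and prove it, via the restricted-root/$\mathfrak{sl}_2$ rotation argument combined with Kostant's convexity theorem \cite{kostant-convexity}. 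Your route is essentially the one behind the cited result, so the mathematics agrees; what it buys is self-containedness, and a correct diagnosis of why one cannot argue by Morse--Bott theory alone (in the real setting $\mathrm{max}_{\OO}(\beta)$ may be disconnected, so local-but-not-global maxima are not excluded formally). What the paper's citation buys is a three-line proof. Two details in your sketch deserve to be made explicit if you write it out in full: (i) for the identification of the critical values of $\sx\cdot,\beta\xs$ on $N$ with the Weyl-orbit pairings, you need the maximal abelian subalgebra $\lia$ to contain $\eta=\mup(x)$ as well as $\beta$ --- this is possible precisely because $[\eta,\beta]=0$ at a critical point (or, alternatively, conjugate $\eta$ into $\lia$ by $K^\beta$, which does not change the pairing), whereas for a general point $\pi_\lia(\eta)$ need not lie on the orbit; (ii) the contraposition step needs the Weyl-group lemma that if $\sx\eta,\beta\xs<\max_{w\in W}\sx w\eta,\beta\xs$ then some restricted root $\alpha$ satisfies $\alpha(\beta)>0$ and $\alpha(\eta)<0$; this follows by using the stabilizer $W_\beta$ (which preserves both $\beta$ and the set of roots positive on $\beta$) to make $\eta$ dominant. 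Both are standard, so these are presentational gaps rather than mathematical ones.
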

\begin{proof}
If $x\in \OO$ is a local maximum of $\mu_\mathfrak{p}^\beta,$ then $\mu_\mathfrak{p}(x)$ is a local maximum of the height function
$$
K \cdot \mu_\mathfrak{p}(x) \to \mathbb{R}, \quad z \mapsto \langle z, \beta \rangle.
$$ But it was noted in the proof of Proposition 3.9 in \cite{LA} that $\langle \cdot, \beta\rangle$ has only global maximum when restricted to $K \cdot \mu_\mathfrak{p}(x)$. Then a local maximum is a global maximum and this implies that $\mu_\mathfrak{p}(x)$ is a global maximum of the height function $\langle \cdot, \beta\rangle.$
Since
$$
\mathrm{max}_{\OO} (\beta)=  \mathrm{max} \{\langle z, \beta \rangle,\, z\in K\cdot \mup(x)\},
$$ $x$ is a global maximum of $\mu_\mathfrak{p}^\beta.$
\end{proof}

\begin{proposition}
\label{Maxx}
Let $p\in \OO$ be such that $G^{\beta+}\cdot p$ is closed. Then $G^{\beta+}\cdot p$ is a finite union of connected components of
$
\mathrm{max}_{\OO}(\beta).
$
\end{proposition}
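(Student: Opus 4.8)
The plan is to combine three ingredients: that $\mathrm{max}_\OO(\beta)$ is a compact, $G^{\beta+}$-invariant Morse--Bott critical manifold, that its tangent space at every point is exactly the tangent space to the $G^{\beta+}$-orbit, and a Hessian argument forcing any \emph{closed} $G^{\beta+}$-orbit to lie inside the maximum set. Two standing facts will be used repeatedly. First, since $\OO$ is a compact $G$-orbit and $G^{\beta+}\cdot p\subseteq\OO$ is closed, the orbit $G^{\beta+}\cdot p$ is compact. Second, $\OO$ is $G$-homogeneous, so at every critical point $z$ of $\mupb|_\OO$ the inclusions of Proposition \ref{tangent} become equalities: in the splitting \eqref{Dec-tangente} one has $V_+=\lier^{\beta+}\cdot z$, $V_0=\lieg^\beta\cdot z$ and $V_-=\lier^{\beta-}\cdot z$.

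First I would record, exactly as in Lemma \ref{lemmm} with $X$ replaced by the compact orbit $\OO$, that $\mathrm{max}_\OO(\beta)$ is $G^{\beta+}$-invariant. At any $z\in\mathrm{max}_\OO(\beta)$ the Hessian $D^2\mupb(z)$ is negative semidefinite, so $V_+=0$, whence $\lier^{\beta+}\cdot z=0$ and $R^{\beta+}$ fixes $z$; in particular $G^{\beta+}\cdot z=G^\beta\cdot z$. Because $\mupb$ is Morse--Bott (Corollary \ref{MorseBott}) and $\OO$ is compact, $\mathrm{max}_\OO(\beta)$ is a compact submanifold with finitely many connected components, and at each point its tangent space is the Hessian kernel $V_0$. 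Since $V_+=0$ there, $T_z(G^{\beta+}\cdot z)=\lieg^{\beta+}\cdot z=V_0\oplus V_+=V_0=T_z(\mathrm{max}_\OO(\beta))$, so every $G^{\beta+}$-orbit contained in $\mathrm{max}_\OO(\beta)$ is open in it; if it is also closed, it is automatically a finite union of connected components. This reduces the proposition to showing that $G^{\beta+}\cdot p$ meets $\mathrm{max}_\OO(\beta)$.

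The heart of the matter is this meeting claim, and I expect it to be the main obstacle. Since $G^{\beta+}\cdot p$ is compact, $\mupb$ attains its maximum on it at some $q$. As $\beta\in\lieg^\beta\subseteq\lieg^{\beta+}$, the gradient $\beta_\OO(q)=\mathrm{grad}\,\mupb(q)$ is tangent to $G^{\beta+}\cdot q$; since $q$ maximizes $\mupb$ on that orbit, the orthogonal projection of $\beta_\OO(q)$ onto $T_q(G^{\beta+}\cdot q)$ vanishes, and because $\beta_\OO(q)$ already lies in $T_q(G^{\beta+}\cdot q)$ this forces $\beta_\OO(q)=0$. Hence $q$ is a critical point of $\mupb|_\OO$, and the decomposition \eqref{Dec-tangente} applies at $q$. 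Being critical for $\mupb|_\OO$, the Hessian of the restriction $\mupb|_{G^{\beta+}\cdot q}$ at $q$ is $D^2\mupb(q)$ restricted to $T_q(G^{\beta+}\cdot q)=\lieg^{\beta+}\cdot q\supseteq\lier^{\beta+}\cdot q=V_+$. If $V_+\neq 0$ this restricted Hessian would be positive on $V_+$, contradicting that $q$ is a maximum of $\mupb$ on $G^{\beta+}\cdot q$; therefore $V_+=0$. Thus $q$ is a local maximum of $\mupb|_\OO$, and by Lemma \ref{gm} a global one, so $q\in\mathrm{max}_\OO(\beta)$.

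Finally, by the $G^{\beta+}$-invariance from the first step, $G^{\beta+}\cdot p=G^{\beta+}\cdot q\subseteq\mathrm{max}_\OO(\beta)$; being simultaneously open (first step) and closed (compactness) in the compact manifold $\mathrm{max}_\OO(\beta)$, it is a finite union of its connected components, as claimed. The delicate point is precisely the Hessian computation giving $V_+=0$: it rests on the homogeneity of $\OO$ through the equality $V_+=\lier^{\beta+}\cdot q$ of Proposition \ref{tangent}, without which a maximum over the parabolic orbit need not be a maximum over $\OO$.
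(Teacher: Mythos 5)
Your proof is correct, and its core coincides with the paper's: the paper's first paragraph is exactly your ``heart of the matter'' step --- take a maximum $q$ of $\mupb$ on the compact orbit $G^{\beta+}\cdot p$, note (as you make explicit and the paper leaves implicit) that $q$ is a critical point of $\mupb|_\OO$ because $\mathrm{grad}\,\mupb=\beta_X$ is tangent to the parabolic orbit, use Proposition \ref{tangent} with the homogeneity equalities to force $\lier^{\beta+}\cdot q=V_+=0$, and then apply Lemma \ref{gm} and the invariance Lemma \ref{lemmm} to conclude $G^{\beta+}\cdot p\subseteq\mathrm{max}_\OO(\beta)$. Where you genuinely differ is the finish. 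The paper proves more at that stage: using the same tangent-space identity $T_x\mathrm{max}_\OO(\beta)=\lieg^\beta\cdot x$, plus the fact that $\mup$ restricted to a connected component of $\mathrm{max}_\OO(\beta)$ is the gradient map for the $(G^\beta)^o$-action and Proposition \ref{heinzner-maximun}, it shows that each connected component of $\mathrm{max}_\OO(\beta)$ is a single $(G^\beta)^o$-orbit, and then counts components of $G^\beta$. Your finish is more elementary: the given orbit is open in $\mathrm{max}_\OO(\beta)$ by the tangent-space identity, closed by hypothesis, and an open and closed subset of a compact manifold is a finite union of connected components; no appeal to Proposition \ref{heinzner-maximun} or to restricted gradient maps is needed. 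The trade-off is that the paper's stronger intermediate statement (each component is a $(G^\beta)^o$-orbit) directly feeds the subsequent Corollary that $G^{\beta+}\cdot x$ is closed for \emph{every} $x\in\mathrm{max}_\OO(\beta)$; your argument recovers that too, but only after the extra remark that a connected component partitioned into disjoint open orbits must consist of a single orbit.
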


\begin{proof}
Since $G^{\beta+} \cdot p$ is compact, $\mu_\mathfrak{p}^\beta|_{G^{\beta+}\cdot p}$ has a maximum. Let $q\in G^{\beta+}\cdot p$ denote a maximum of $\mu_\mathfrak{p}^\beta|_{G^{\beta+}\cdot p}$. By Proposition \ref{tangent}, $q$ is a $R^{\beta+}$ fixed point. Applying again, Proposition \ref{tangent}, keeping in mind that $\OO$ is $G$ homogeneous, $q$ is a local maximum of $\mu_\mathfrak{p}^\beta : \OO \to \mathbb{R.}$ By Lemma \ref{gm}, $q$ is a global maximum of $\mu_\mathfrak{p}^\beta.$ By Lemma \ref{lemmm}, the unipotent group $R^{\beta+}$ acts trivially on $\mathrm{max}_{\OO}(\beta)$ and $G^{\beta+}\cdot p \subset \mathrm{max}_{\OO}(\beta).$

Let $x \in \mathrm{max}_{\OO}(\beta),$ By Proposition \ref{tangent} and Corollary \ref{MorseBott}, keeping in mind that $\OO$ is $G$ homogeneous and $R^{\beta+}$ acts trivially on $\mathrm{max}_{\OO}(\beta)$, it follows that
$T_x\mathrm{max}_{\OO}(\beta) = T_x G^\beta \cdot x.$ By Lemma \ref{lemmm} $(G^\beta)^o$ preserves any connected component of $\mathrm{max}_{\OO}(\beta).$ Moreover, the restriction of $\mu_\mathfrak{p}$ to any connected component of $\mathrm{max}_{\OO}(\beta)$ defines the gradient map of $(G^\beta)^o$, see \cite{heinzner-schwarz-stoetzel}. By Proposition \ref{heinzner-maximun} it follows that $(G^\beta)^o$ has a closed orbit on any connected component of $\mathrm{max}_{\OO}(\beta).$ Since any $(G^{\beta})^o$ orbit is open in  $\mathrm{max}_{\OO}(\beta),$ it follows that the connected component of $\mathrm{max}_{\OO}(\beta)$ containing $x$ is $(G^\beta)^o$ homogeneous. The connected components of $G^\beta$ are finite and intersect the connected components of $K^\beta$. Therefore, keeping in mind that $R^{\beta+}$ acts trivially on $\mathrm{max}_{\OO}(\beta)$, $G^{\beta+}\cdot x$ is a finite union of connected components of $\mathrm{max}_{\OO}(\beta).$ The same result holds for $G^{\beta+}\cdot p$, concluding the proof.
\end{proof}
\begin{corollary}
Let $x\in \mathrm{max}_{\OO}(\beta).$ Then $G^{\beta+} \cdot x$ is closed and a finite union of connected components of $\mathrm{max}_{\OO}(\beta).$
\end{corollary}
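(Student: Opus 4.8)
The plan is to observe that the substantive content of the statement has already been produced inside the proof of Proposition~\ref{Maxx}: the second paragraph of that proof takes an \emph{arbitrary} point of $\mathrm{max}_{\OO}(\beta)$ and determines the structure of its $G^{\beta+}$-orbit without invoking the standing hypothesis that $G^{\beta+}\cdot p$ be closed. So the only genuinely new thing to check is the closedness of $G^{\beta+}\cdot x$, after which Proposition~\ref{Maxx} applies directly. I would therefore reuse that argument, taking care to confirm it is self-contained.

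First I would record the elementary reductions. By Lemma~\ref{lemmm} (applied with $\OO$ in place of $X$), the set $\mathrm{max}_{\OO}(\beta)$ is $G^{\beta+}$-invariant and the unipotent radical $R^{\beta+}$ acts trivially on it. Hence for $x\in\mathrm{max}_{\OO}(\beta)$ we have $G^{\beta+}\cdot x = G^\beta\cdot x \subset \mathrm{max}_{\OO}(\beta)$, so it suffices to analyse the $G^\beta$-orbit.

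Next I would identify the tangent space. Combining Proposition~\ref{tangent}, Corollary~\ref{MorseBott}, and the triviality of the $R^{\beta+}$-action (so that the $V_+$ directions are tangent to the orbit) gives $T_x\mathrm{max}_{\OO}(\beta) = T_x(G^\beta\cdot x) = T_x((G^\beta)^o\cdot x)$; consequently each $(G^\beta)^o$-orbit is open in $\mathrm{max}_{\OO}(\beta)$. Since the restriction of $\mu_\liep$ to a connected component $C$ of $\mathrm{max}_{\OO}(\beta)$ is the gradient map of $(G^\beta)^o$ (see \cite{heinzner-schwarz-stoetzel}), Proposition~\ref{heinzner-maximun} produces a closed $(G^\beta)^o$-orbit inside $C$; being simultaneously open, that orbit exhausts the connected $C$. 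Thus the component of $\mathrm{max}_{\OO}(\beta)$ through $x$ is exactly $(G^\beta)^o\cdot x$, which is a closed subset of the compact orbit $\OO$, hence compact and closed.

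Finally, as $G^\beta$ has only finitely many connected components, $G^{\beta+}\cdot x = G^\beta\cdot x$ is a finite union of sets $(G^\beta)^o\cdot x$, each a connected component of $\mathrm{max}_{\OO}(\beta)$; in particular $G^{\beta+}\cdot x$ is closed. With closedness in hand, Proposition~\ref{Maxx} applies verbatim with $p=x$ and yields the full conclusion. The one point demanding care — and the main, if mild, obstacle — is verifying that the homogeneity-of-components argument does not secretly rely on the closedness hypothesis of Proposition~\ref{Maxx}, so that it may legitimately be applied to the a priori possibly non-closed orbit $G^{\beta+}\cdot x$ to establish its closedness in the first place.
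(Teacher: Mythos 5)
Your proposal is correct and takes essentially the same route as the paper: the paper supplies no separate argument for this corollary precisely because the second paragraph of the proof of Proposition~\ref{Maxx} already treats an \emph{arbitrary} point $x\in\mathrm{max}_{\OO}(\beta)$, deriving $T_x\mathrm{max}_{\OO}(\beta)=T_x(G^\beta\cdot x)$, the openness and closedness of $(G^\beta)^o$-orbits in each connected component, and hence that $G^{\beta+}\cdot x$ is a finite (so compact, so closed) union of components. You have also correctly checked the only delicate point, namely that this part of the argument is non-circular: its sole external input, the triviality of the $R^{\beta+}$-action on $\mathrm{max}_{\OO}(\beta)$, comes from Lemma~\ref{lemmm} and not from the closedness hypothesis of Proposition~\ref{Maxx}.
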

Summing up, we have proved our first main result.
\begin{theorem}\label{summing}
Let $\beta \in \mathfrak{p}.$ Then:
\begin{itemize}
    \item if $G^{\beta+}\cdot x$ is compact, then $\OO = G\cdot x$ is compact and $G^{\beta+}\cdot x$ is a finite union of connected components of $\mathrm{max}_{\OO}(\beta):$

    \item if $\OO$ is a compact $G$-orbit, then $\mathrm{max}_{\OO}(\beta)$ is a finite union of compact $G^{\beta+}$-orbits.
\end{itemize}
In particular, the number of compact $G^{\beta +}$-orbits is equal or bigger than the number of compact $G$-orbits.
\end{theorem}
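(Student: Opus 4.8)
The plan is to assemble the results already established, since Theorem \ref{summing} merely packages Lemma \ref{lemmm}, Lemma \ref{gm}, Proposition \ref{Maxx} and the decomposition $G=KG^{\beta+}$ into the two implications together with the counting consequence.

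For the first bullet I would argue as follows. Suppose $G^{\beta+}\cdot x$ is compact. Since $G^{\beta+}$ is a parabolic subgroup we have $G=KG^{\beta+}$, hence $\OO=G\cdot x=K\cdot(G^{\beta+}\cdot x)$ is the image of the compact set $K\times(G^{\beta+}\cdot x)$ under the continuous action map, and is therefore compact. Once $\OO$ is known to be compact, a compact $G^{\beta+}$-orbit is in particular closed, so Proposition \ref{Maxx} applies with $p=x$ and yields that $G^{\beta+}\cdot x$ is a finite union of connected components of $\mathrm{max}_{\OO}(\beta)$.

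For the second bullet, assume $\OO$ is a compact $G$-orbit. Then $\mupb$ attains its maximum on $\OO$, so $\mathrm{max}_{\OO}(\beta)$ is a nonempty compact submanifold, and in particular has only finitely many connected components. By the corollary to Proposition \ref{Maxx}, for every $x\in\mathrm{max}_{\OO}(\beta)$ the orbit $G^{\beta+}\cdot x$ is closed, hence compact as a closed subset of $\OO$, and equals a finite union of connected components of $\mathrm{max}_{\OO}(\beta)$. Since these orbits partition $\mathrm{max}_{\OO}(\beta)$ and there are only finitely many connected components, $\mathrm{max}_{\OO}(\beta)$ is a finite union of compact $G^{\beta+}$-orbits.

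Finally, for the counting statement I would define the assignment sending a compact $G^{\beta+}$-orbit $G^{\beta+}\cdot x$ to the $G$-orbit $G\cdot x$. By the first bullet this target is a compact $G$-orbit, so the assignment is well defined; by the second bullet every compact $G$-orbit $\OO$ contains at least one such $G^{\beta+}$-orbit inside $\mathrm{max}_{\OO}(\beta)$, so the assignment is surjective. This yields the desired inequality. The genuinely substantial input is Proposition \ref{Maxx} (resting on the Morse-Bott property of Corollary \ref{MorseBott} and the slice description), so at the level of Theorem \ref{summing} the only point requiring care is the propagation of compactness: upgrading the closedness of $\OO$ to compactness in the first bullet via $G=KG^{\beta+}$, and using compactness of $\OO$ to bound the number of connected components in the second.
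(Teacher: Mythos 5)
Your proposal is correct and follows essentially the same route as the paper, which proves the theorem by ``summing up'' the preceding results: the decomposition $G=KG^{\beta+}$ (the paper's Remark~(a)) for compactness of $\OO$, Proposition~\ref{Maxx} for the first bullet, and its Corollary for the second. Your write-up merely makes explicit what the paper leaves implicit, including the clean observation that the continuous image $K\cdot(G^{\beta+}\cdot x)$ is compact (not just closed) and the surjectivity argument for the orbit count.
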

Let $Q\subset U^\C$ be a parabolic subgroup. There exists $\beta \in i \mathfrak{u}$ such that
$
Q = (U^\C)^{\beta+}.
$
If $\tilde \OO$ is a compact $U^\C$-orbit, then it is a complex $U$-orbit and so a flag manifold \cite{Guillemin2}. By definition of the gradient map,
\[
\mathrm{max}_{\tilde \OO}(\beta)=\mathrm{max}\{p\in \tilde \OO:\, \langle \mu(p), -i\beta\rangle=\mathrm{max}_{p\in \tilde \OO} \mu^{-i\beta} \}
 \]
and so it is connected \cite{Atiyah,Guillemin}. This means that $(U^\C)^{\beta +}$ has a unique closed orbit in $\tilde \OO.$
\begin{corollary}\label{complex-parabolic}
The number of compact $(U^\C)^{-i\beta +}$-orbits is equal to the number of compact $U^\C$-orbits. Any closed $(U^\C)^{-i\beta +}$-orbit arises as $\mathrm{max}_{\OO}(\beta)$, where $\tilde \OO$ is a compact $U^\C$-orbit.
\end{corollary}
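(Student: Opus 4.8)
The plan is to promote the inequality of Theorem \ref{summing} to an equality by exhibiting $\tilde\OO \mapsto \mathrm{max}_{\tilde\OO}(\beta)$ as a bijection between the compact $U^\C$-orbits and the compact $(U^\C)^{\beta+}$-orbits. The first thing to record is that $U^\C$ is a compatible subgroup of itself: here $K=U$ and $\liep=i\liu$, the Cartan decomposition is the polar decomposition $U^\C=U\exp(i\liu)$, and by \eqref{mu3} the associated gradient map is governed by $\mu^{-i\beta}$ for $\beta\in\liep$. Consequently Theorem \ref{summing} applies verbatim with $G$ replaced by $U^\C$. The extra ingredient available in the complex case, and the one that collapses a ``finite union'' into a single orbit, is the connectedness of $\mathrm{max}_{\tilde\OO}(\beta)$: since a compact $U^\C$-orbit $\tilde\OO$ is a flag manifold \cite{Guillemin2} and $-i\beta\in\liu$ generates a torus action on it, the maximum set of the component $\mu^{-i\beta}$ of the momentum map is connected by the Atiyah--Guillemin theorem \cite{Atiyah,Guillemin}.

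First I would fix a compact $U^\C$-orbit $\tilde\OO$ and show that $\mathrm{max}_{\tilde\OO}(\beta)$ is a single $(U^\C)^{\beta+}$-orbit. By Theorem \ref{summing} applied to $G=U^\C$, this set is a finite union of compact $(U^\C)^{\beta+}$-orbits, each of which is a finite union of connected components of $\mathrm{max}_{\tilde\OO}(\beta)$. Now $(U^\C)^{\beta+}$ is connected, so any such orbit is connected; by the tangent-space computation of Proposition \ref{tangent} (whose inclusions are equalities because $\tilde\OO$ is $U^\C$-homogeneous) the orbit is open in $\mathrm{max}_{\tilde\OO}(\beta)$, and being compact it is also closed. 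Connectedness of $\mathrm{max}_{\tilde\OO}(\beta)$ then forces the orbit to fill the whole set, so $\mathrm{max}_{\tilde\OO}(\beta)$ is a single compact $(U^\C)^{\beta+}$-orbit and the assignment is well defined.

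Next I would verify that the assignment is bijective. Injectivity is immediate: $\mathrm{max}_{\tilde\OO}(\beta)\subset\tilde\OO$, so any point $q$ of the maximum set recovers $\tilde\OO=U^\C\cdot q$, and distinct (hence disjoint) $U^\C$-orbits have distinct images. For surjectivity, take a compact $(U^\C)^{\beta+}$-orbit $Q\cdot p$; by the first item of Theorem \ref{summing} the orbit $\tilde\OO:=U^\C\cdot p$ is compact and $Q\cdot p$ is a finite union of connected components of $\mathrm{max}_{\tilde\OO}(\beta)$, whence $Q\cdot p=\mathrm{max}_{\tilde\OO}(\beta)$ again by connectedness. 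This proves that every compact $(U^\C)^{\beta+}$-orbit is of the form $\mathrm{max}_{\tilde\OO}(\beta)$ for a compact $U^\C$-orbit $\tilde\OO$ --- the final assertion of the statement, once one notes that inside a compact $U^\C$-orbit a closed $(U^\C)^{\beta+}$-orbit is automatically compact. A bijection between the two families then yields the asserted equality of cardinalities.

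The only delicate step is the passage from ``finite union of connected components'' to ``single orbit'', which rests entirely on the connectedness of $\mathrm{max}_{\tilde\OO}(\beta)$. This is precisely where complex reductivity enters: $\tilde\OO$ is a compact homogeneous \Keler manifold, so the Atiyah--Guillemin convexity theorem applies to the circle action generated by $-i\beta$. In the genuinely real setting this connectedness can fail, and that failure is exactly what produces the strict inequality permitted in Theorem \ref{summing}.
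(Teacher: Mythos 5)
Your proof is correct and follows essentially the same route as the paper: apply Theorem \ref{summing} with $G=U^\C$ (compatible in itself, $K=U$, $\liep=i\liu$), note that a compact $U^\C$-orbit is a flag manifold, and use the Atiyah--Guillemin connectedness of $\mathrm{max}_{\tilde\OO}(\beta)$ to collapse the ``finite union of compact orbits'' into a single $(U^\C)^{\beta+}$-orbit. The paper leaves the resulting bijection implicit, whereas you spell out well-definedness, injectivity, and surjectivity, but the mathematical content is identical.
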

Assume that $G$ is a real form of $U^\C.$ Then $\mathfrak{g} = \mathfrak{k}\oplus \mathfrak{p}$, $\mathfrak{u} = \mathfrak{k}\oplus i\mathfrak{p}$ and so
 $\mathfrak{g}^\C = \mathfrak{u}^\C$ (see \cite{heinzner-stoetzel,knapp-beyond}).

Assume that there exists $x\in Z$ such that $U^\C \cdot x$ is compact. Then $U^\C \cdot x =U\cdot x$ and so a flag manifold. The following result is essentially an old Theorem of Wolf \cite{Wolf}, see also \cite{heinzner-schwarz-stoetzel}.
\begin{theorem}[Wolf]\label{Unique closed}
There exists a unique closed $G$-orbit in $U^\C \cdot x.$
\end{theorem}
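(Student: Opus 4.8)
The plan is to treat existence and uniqueness separately, obtaining the identity $\mu_{\mathfrak a}(X)=\mu_{\mathfrak a}(\OO)$ as a byproduct of the uniqueness argument.

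\medskip
\emph{Existence.} Since $X=U^\C\cdot x$ is compact, the norm square function $\nu_\liep=\frac12\Vert\mup\Vert^2$ attains its maximum at some $q\in X$; in particular $q$ is a maximum of $\nu_\liep$ restricted to $G\cdot q$, so Proposition~\ref{heinzner-maximun} gives $G\cdot q=K\cdot q$, which is compact and hence a closed $G$-orbit $\OO$. All the work lies in uniqueness.

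\medskip
\emph{Closed orbits are critical, and must reach the complex maximum.} First I would note that every closed $G$-orbit consists of critical points of $\nu_\liep$: if $G\cdot p$ is closed it is compact, so $G\cdot p=K\cdot p$ by Proposition~\ref{heinzner-maximun} and thus $\liep\cdot p\subseteq\liek\cdot p$; writing $\nu_\liep=\frac12\sum_i(\mu_\liep^{e_i})^2$ and using $\mathrm{grad}\,\mu_\liep^{e_i}=(e_i)_X$ gives $\mathrm{grad}\,\nu_\liep(p)=(\mup(p))_X(p)\in\liep\cdot p\subseteq T_p(K\cdot p)$, while $K$-invariance of $\nu_\liep$ forces this gradient to be orthogonal to $T_p(K\cdot p)$; hence it vanishes and, with $\beta:=\mup(p)$, we get $\beta_X(p)=0$. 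The heart of the matter is then the claim $\mathrm{max}_\OO\mupb=\mathrm{max}_X\mupb$ for every $\beta\in\liep$, i.e. that the closed orbit attains the maximum of $\mupb=\mu^{-i\beta}$ over all of $X$. Here the key observation is that $\mupb$ depends only on $\mu$ and $\beta$, not on the group: since $X$ is $U^\C$-homogeneous, Proposition~\ref{tangent} holds for the complex group \emph{with equalities}, so at a critical point the positive part of the Hessian of $\mupb$ is exactly $\lier_{U^\C}^{\beta+}\cdot p$, and by Corollary~\ref{complex-parabolic} the maximum set $\mathrm{max}_X(\beta)$ is connected and is a single compact $(U^\C)^{\beta+}$-orbit. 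I would use the closed-orbit condition $\lieg=\liek+\lieg_p$, together with the conjugation defining the real form, to force $\beta=\mup(p)$ to lie in this maximum stratum.

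\medskip
\emph{Uniqueness, the byproduct, and the main obstacle.} Granting the claim, uniqueness follows by induction on $\dim X$: for suitable noncentral $\beta$ the flag $F_\beta:=\mathrm{max}_X(\beta)$ is strictly smaller than $X$, $G^{\beta+}$ acts on it as a real form of $(U^\C)^{\beta+}$, and by Proposition~\ref{Maxx} and Theorem~\ref{summing} the set $\mathrm{max}_\OO(\beta)$ is a closed $G^{\beta+}$-orbit in $F_\beta$; two closed $G$-orbits then produce two closed $G^{\beta+}$-orbits in $F_\beta$, which coincide by the inductive hypothesis and, since they lie in both original orbits, force these to be equal. Moreover, once $\mathrm{max}_\OO\mupb=\mathrm{max}_X\mupb$ is known for all $\beta$, fixing a maximal abelian $\mathfrak a\subset\liep$ and using $\mu_{\mathfrak a}=\pi_{\mathfrak a}\circ\mup$ gives $\mathrm{max}_{y\in\mu_{\mathfrak a}(\OO)}\langle y,\beta\rangle=\mathrm{max}_{y\in\mu_{\mathfrak a}(X)}\langle y,\beta\rangle$ for all $\beta\in\mathfrak a$; both sets being compact convex (by Kostant's theorem $\mu_{\mathfrak a}(\OO)=\conv(W\cdot\mu_{\mathfrak a}(p))$), Proposition~\ref{convex-criterium} yields $\mu_{\mathfrak a}(X)=\mu_{\mathfrak a}(\OO)$. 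The main obstacle is precisely the claim $\mathrm{max}_\OO\mupb=\mathrm{max}_X\mupb$: a priori $\mup$ is not injective on $G$-orbits, so equality of momentum images does not give equality of orbits, and one must genuinely use the $U^\C$-homogeneous slice structure of Corollary~\ref{slice-cor} and the real-form condition, rather than convexity alone, to certify that the closed orbit meets the complex maximum stratum.
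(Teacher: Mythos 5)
Your existence argument is exactly the paper's: $\nu_\liep$ attains its maximum on the compact set $U^\C\cdot x$, and Proposition \ref{heinzner-maximun} turns that maximum point into a compact $G$-orbit. The gap is in uniqueness. Everything there rests on the claim $\mathrm{max}_\OO\mupb=\mathrm{max}_X\mupb$ for every $\beta\in\liep$, and you never prove it: the sentence ``I would use the closed-orbit condition $\lieg=\liek+\lieg_p$, together with the conjugation defining the real form, to force $\beta=\mup(p)$ to lie in this maximum stratum'' is a declaration of intent, not an argument, and it even shifts the quantifier (the claim must hold for \emph{all} $\beta\in\liep$, while your sketch concerns only the single element $\beta=\mup(p)$ attached to a point of the closed orbit). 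You yourself concede this is ``the main obstacle.'' The problem is not cosmetic: in the paper this claim is precisely the content of the Lemma that \emph{follows} Theorem \ref{Unique closed}, and its proof there uses the uniqueness theorem --- given uniqueness, a closed $G^{\beta+}$-orbit $\mathcal{C}$ inside $\mathrm{max}_{U^\C\cdot x}(\beta)$ yields the compact $G$-orbit $G\cdot\mathcal{C}=K\cdot\mathcal{C}$ (since $G=KG^{\beta+}$), which must equal $\OO$, whence $\OO\cap\mathrm{max}_{U^\C\cdot x}(\beta)\neq\emptyset$. So your plan inverts the paper's logical order and therefore needs an independent proof of the claim; without one, the induction and the byproduct $\mu_{\lia}(X)=\mu_{\lia}(\OO)$ have nothing to stand on.

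The paper's actual proof is much shorter and avoids the claim entirely, by reducing to Wolf's classical theorem: write $G=Z(G)^o\cdot G_{ss}$ with $G_{ss}$ closed, compatible, with Lie algebra $[\lieg,\lieg]$; since $U^\C\cdot x$ is a flag manifold the center acts trivially on it, and $G_{ss}$ is a real form of $(U_{ss})^\C$, so Wolf's theorem gives a unique closed $G_{ss}$-orbit in $U^\C\cdot x$; by Proposition \ref{heinzner-maximun} any closed $G$-orbit contains a closed $G_{ss}$-orbit, so two closed $G$-orbits would both contain that unique orbit and hence coincide. Your proposal never invokes Wolf's result at all: in effect you are attempting to re-prove Wolf's theorem from scratch by gradient-map induction on flag manifolds, which is a genuinely harder undertaking, and the step where all the difficulty is concentrated is exactly the one left open. (The induction step also needs repair at the margins: $G^{\beta+}$ is not a real form of $(U^\C)^{\beta+}$; the correct statement is that the Levi factor $G^\beta$ --- modulo connected components and its center --- plays the role of a real form of $(U^\C)^\beta$ acting on the smaller flag $F_\beta$, and a base case for the induction must be checked.)
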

\begin{proof}
Let $G_{ss}$ denote the connected subgroup of $G$  with Lie algebra $[\mathfrak g,\mathfrak g]$. Then $G_{ss}$ is closed, compatible and $G = Z(G)^o \cdot G_{ss},$ where $Z(G)^o$ is the connected component of the center \cite[p.442]{knapp-beyond}. By Proposition \ref{heinzner-maximun}, $G$ has a closed orbit in $U^\C\cdot x.$ The center of $U$ does not act on $U\cdot x$ and $G_{ss}$ is a real from of $(U^\C)_{ss} = (U_{ss})^\C$. By a Theorem of Wolf \cite{Wolf}, $G_{ss}$ has a unique closed orbit in $U^\C\cdot x.$ On the other hand, by Proposition \ref{heinzner-maximun} it follows that $G_{ss}$ has  a closed orbit on any closed orbit of $G$.   Therefore, $G$ has a unique closed orbit in $U^\C\cdot x.$
\end{proof}
Let $\OO$ denote the unique compact $G$-orbit in $U^\C \cdot x.$ Let $\beta \in \liep$. We denote by
\[
\mathrm{max}_{U^\C \cdot x} (\beta)=\left\{p\in U^\C \cdot x:\, \mu_{\mathfrak p}^{\beta}(p)=\mathrm{max}_{p\in U^\C \cdot x} \mu_{\mathfrak p}^\beta  \right\}
\]
\begin{lemma}
For any $\beta \in \mathfrak{p},$  $\mathrm{max}_{U^\C \cdot x} (\beta) \cap \OO \neq \emptyset$. Hence
$$
\mathrm{max}_{U^\C\cdot x } (\beta) \cap \OO =\mathrm{max}_{\OO} (\beta).
$$
\end{lemma}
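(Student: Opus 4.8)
The plan is to exhibit a single point $q\in\OO$ at which $\mupb$ attains its maximum over all of $X=U^\C\cd x$; this at once gives $\mathrm{max}_{U^\C\cd x}(\beta)\cap\OO\neq\emptyset$ together with $\max_X\mupb=\max_\OO\mupb$, from which the displayed identity follows formally. Concretely I would pick $q\in\OO$ with $\mupb(q)=\max_{z\in\OO}\mupb(z)$ (such a $q$ exists since $\OO$ is a compact $G$-orbit by Theorem \ref{Unique closed}), and reduce everything to proving that $q$ is in fact a global maximum of $\mupb$ on $X$.

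First I would check that $q$ is a critical point of $\mupb$ on the whole of $X$, not merely on $\OO$. Since $\beta\in\liep\subset\lieg$, the gradient $\mathrm{grad}\,\mupb=\beta_X$ is everywhere tangent to the $G$-orbit, so $\beta_X(q)\in T_q\OO$; as $q$ is an interior maximum of the restriction $\mupb|_\OO$, the tangential derivative vanishes, forcing $\beta_X(q)=0$. Thus $q\in\Crit(\mupb)$ and Proposition \ref{tangent} applies to the $U^\C$-homogeneous manifold $X$: writing $T_qX=V_-\oplus V_0\oplus V_+$ for the Hessian decomposition of $D^2\mupb(q)$, the $U^\C$-homogeneity gives the equality $V_+=d\alfa_e(\lier^{\beta+}_{\liu^\C})$, where $\lier^{\beta+}_{\liu^\C}=\bigoplus_{\lambda>0}V_\lambda(\ad\beta)$ is now computed inside $\liu^\C$ and $\alfa$ is the $U^\C$-orbit map. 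Hence $q$ is a local maximum of $\mupb$ on $X$ precisely when $\lier^{\beta+}_{\liu^\C}\cd q=0$.

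The heart of the matter is to upgrade the $G$-level vanishing to the $U^\C$-level, and this is the step I expect to carry the real content of the lemma. Because $q$ maximizes $\mupb|_\OO$ and $\OO$ is $G$-homogeneous, Proposition \ref{tangent} applied to $G$ gives $\lier^{\beta+}\cd q=0$, i.e. $\lier^{\beta+}\subseteq\lieg_q$. I would then use that $G$ is a real form, so $\liu^\C=\lieg^\C=\lieg\oplus i\lieg$, and that $\beta\in\lieg$ acts on $\liu^\C$ as the complexification of $\ad\beta|_\lieg$ with real eigenvalues; consequently each eigenspace complexifies and $\lier^{\beta+}_{\liu^\C}=\lier^{\beta+}\oplus i\,\lier^{\beta+}$. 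Since $q$ lies in the flag manifold $X=U^\C\cd x$, the isotropy algebra $(\liu^\C)_q$ is a complex subalgebra, hence stable under multiplication by $i$; from $\lier^{\beta+}\subseteq\lieg_q\subseteq(\liu^\C)_q$ I then obtain $i\,\lier^{\beta+}\subseteq(\liu^\C)_q$ as well, so $\lier^{\beta+}_{\liu^\C}\subseteq(\liu^\C)_q$ and therefore $V_+=0$. Turning the real isotropy condition into the full complex one via the complex structure of $(\liu^\C)_q$ is the only genuinely non-formal point; everything else is bookkeeping.

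Finally, $V_+=0$ makes $q$ a local maximum of $\mupb$ on $X$. Now $\mupb=\mu^{-i\beta}$ with $-i\beta\in\liu$, so it is a component of the momentum map of the Hamiltonian $U$-action on the compact connected manifold $X$; a standard property of such functions is that their set of local maxima is connected and coincides with the global maximum set (cf. \cite{Atiyah,Guillemin}), so every local maximum is global. Therefore $\mupb(q)=\max_X\mupb$, which proves $q\in\mathrm{max}_{U^\C\cd x}(\beta)\cap\OO$. Combining $\OO\subseteq X$ with $q\in\OO$ yields $\max_\OO\mupb=\max_X\mupb$, whence for $z\in\OO$ the condition $z\in\mathrm{max}_{U^\C\cd x}(\beta)$ is equivalent to $z\in\mathrm{max}_\OO(\beta)$, giving the asserted equality $\mathrm{max}_{U^\C\cd x}(\beta)\cap\OO=\mathrm{max}_\OO(\beta)$.
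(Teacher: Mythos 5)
Your proof is correct, but it takes a genuinely different route from the paper's. The paper argues at the level of orbits: by Corollary \ref{complex-parabolic}, $\mathrm{max}_{U^\C\cdot x}(\beta)$ is the unique closed $(U^\C)^{\beta+}$-orbit; since $G$ is a real form, $G^{\beta+}=G\cap(U^\C)^{\beta+}$, so $R^{\beta+}$ acts trivially on this set; running the argument of Proposition \ref{Maxx} inside it produces a compact $G^{\beta+}$-orbit, hence (via $G=KG^{\beta+}$) a compact $G$-orbit, which Wolf's uniqueness theorem (Theorem \ref{Unique closed}) forces to be $\OO$. You instead argue infinitesimally from below: a maximum $q$ of $\mupb$ on $\OO$ is critical on all of $X$; Proposition \ref{tangent} applied to $G$ acting on $\OO$ gives $\lier^{\beta+}\subseteq\lieg_q$; the real-form hypothesis then enters through the eigenspace complexification $\lier^{\beta+}_{\liu^\C}=\lier^{\beta+}\oplus i\,\lier^{\beta+}$ together with the complex-linearity of the isotropy algebra $(\liu^\C)_q$ of the holomorphic $U^\C$-action, which kills $V_+$ at $q$; and the Atiyah--Guillemin--Sternberg principle that local maxima of momentum-map components on compact connected K\"ahler manifolds are global finishes the argument. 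Both proofs rest on Proposition \ref{tangent} and, ultimately, on the same symplectic connectedness results, but they consume them differently: the paper's route recycles machinery already built (Proposition \ref{Maxx}, Theorem \ref{Unique closed}) and identifies along the way a closed $G^{\beta+}$-orbit inside the unique closed $(U^\C)^{\beta+}$-orbit, whereas yours needs only the compactness of $\OO$ (never its uniqueness as a closed $G$-orbit), bypasses Corollary \ref{complex-parabolic}, and replaces the group-level identity $G^{\beta+}=G\cap(U^\C)^{\beta+}$ by its Lie-algebra shadow. The one step you should make explicit is that ``$V_+=0$ implies local maximum'' uses the Morse--Bott property (Corollary \ref{MorseBott}); but this is the same implicit inference the paper makes in Proposition \ref{Maxx}, so it is not a gap.
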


\begin{proof}
Since
$$
\mathrm{max}_{U^\C\cdot x} (\beta ) = \mathrm{max}\left\{p\in U^\C \cdot x:\, \langle \mu(p), -i\beta\rangle=\mathrm{max}_{p\in U^\C \cdot x} \mu^{-i\beta}\right\},
$$
by Corollary \ref{complex-parabolic} it follows that $\mathrm{max}_{U^\C \cdot x} (\beta)$ is the unique closed orbit of $(U^\C)^{\beta+}.$ Since
$$
G^{\beta+} = G\cap(U^\C)^{\beta+}
$$
we get $R^{\beta+}\subset R((U^\C)^{-i\beta+})$ and so $R^{\beta+}$ acts trivially on $\mathrm{max}_{U^\C\cdot x}\mu_\mathfrak{p}^\beta$. Applying the same arguments of Proposition \ref{Maxx}, it follows that $G^{\beta+}$ has a closed orbit in $\mathrm{max}_{U^\C\cdot x}\mu_\mathfrak{p}^\beta$. By Theorem \ref{Unique closed} we get $\mathrm{max}_{U^\C\cdot x} (\beta) \cap \OO \neq \emptyset$ and the result follows.
\end{proof}
As a consequence, we obtain the following result.
\begin{proposition}
Let $\mathfrak{a}\subset \mathfrak{p}$ be an Abelian subalgebra. Then
$$
\mu_\mathfrak{a}(U^\C\cdot x) = \mu_\mathfrak{a}(\OO).
$$
\end{proposition}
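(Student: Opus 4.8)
The plan is to reduce the statement to the support-function criterion of Proposition~\ref{convex-criterium}. Both $\mu_\lia(\OO)$ and $\mu_\lia(U^\C\cd x)$ are compact convex subsets of $\lia$ — in fact polytopes — by the abelian convexity theorem for gradient maps \cite{heinzner-schwarz-stoetzel}; and since $\OO\subset U^\C\cd x$ we have $\mu_\lia(\OO)\subseteq\mu_\lia(U^\C\cd x)$. Setting $C_1:=\mu_\lia(\OO)$ and $C_2:=\mu_\lia(U^\C\cd x)$, it then suffices to check that their support functions coincide, i.e. that $\mathrm{max}_{y\in C_1}\sx y,\beta\xs=\mathrm{max}_{y\in C_2}\sx y,\beta\xs$ for every $\beta\in\lia$, whence $C_1=C_2$.

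First I would rewrite each support value in terms of the $\liep$-gradient map. Since $\mu_\lia=\pi_\lia\circ\mu_\liep$ with $\pi_\lia$ the orthogonal projection onto $\lia$, for $\beta\in\lia$ and any point $z$ one has $\sx\mu_\lia(z),\beta\xs=\sx\pi_\lia(\mu_\liep(z)),\beta\xs=\sx\mu_\liep(z),\beta\xs=\mup^\beta(z)$, because $\pi_\lia$ annihilates the orthogonal complement of $\lia$. Hence
\[
\mathrm{max}_{y\in\mu_\lia(\OO)}\sx y,\beta\xs=\mathrm{max}_{z\in\OO}\mup^\beta(z),\qquad \mathrm{max}_{y\in\mu_\lia(U^\C\cd x)}\sx y,\beta\xs=\mathrm{max}_{z\in U^\C\cd x}\mup^\beta(z),
\]
so the required equality of support functions is precisely the equality of the maximal values of $\mup^\beta$ over $\OO$ and over $U^\C\cd x$.

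The decisive input is the preceding Lemma: for every $\beta\in\liep$, and in particular for $\beta\in\lia$, we have $\mathrm{max}_{U^\C\cd x}(\beta)\cap\OO=\mathrm{max}_{\OO}(\beta)\neq\vacuo$. A point $p$ of this intersection lies in $\OO$ and realises the global maximum of $\mup^\beta$ over all of $U^\C\cd x$; combined with $\OO\subset U^\C\cd x$ this forces $\mathrm{max}_{z\in\OO}\mup^\beta(z)=\mathrm{max}_{z\in U^\C\cd x}\mup^\beta(z)$. The hypothesis of Proposition~\ref{convex-criterium} is thus satisfied for every $\beta\in\lia$, and we conclude $\mu_\lia(\OO)=\mu_\lia(U^\C\cd x)$.

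The genuinely substantive content sits in the preceding Lemma, which rests on Wolf's uniqueness theorem (Theorem~\ref{Unique closed}) and the parabolic analysis behind Proposition~\ref{Maxx}; granting it, the only points demanding care are the elementary passage between $\mu_\lia$ and $\mu_\liep$ and the convexity of the two image sets, after which the abstract criterion closes the argument. I do not expect any obstacle beyond assembling these ingredients correctly.
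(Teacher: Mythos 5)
Your proof is correct and takes essentially the same route as the paper's own: establish that both images are (compact convex) polytopes, use the preceding Lemma to see that the maximum of $\mu_\liep^\beta$ over $U^\C\cdot x$ is attained on $\OO$ for every $\beta$, and conclude via Proposition~\ref{convex-criterium}. The only difference is that you spell out the support-function bookkeeping (the identity $\langle \mu_\lia(z),\beta\rangle=\mu_\liep^\beta(z)$ for $\beta\in\lia$ via the orthogonal projection) which the paper leaves implicit in the phrase ``applying the above Lemma and Proposition~\ref{convex-criterium}.''
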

\begin{proof}
It is well-known that both $\mu_\mathfrak{a}(U^\C\cdot x)$ and $\mu_\mathfrak{a}(\OO)$ are polytope \cite{kostant-convexity,heinzner-schwarz-stoetzel}. Applying the above Lemma and Proposition \ref{convex-criterium}, we get $\mu_\mathfrak{a}(U^\C\cdot x) = \mu_\mathfrak{a}(\OO).$
\end{proof}
Now we are ready to prove our second main result.
\begin{theorem}
The set $\mathrm{max}_{\OO} (\beta)$ is the unique closed orbit of $G^{\beta +}$ in $\OO$. Moreover, it is connected and a $(K^\beta)^o$-orbit.
\end{theorem}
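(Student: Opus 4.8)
The plan is to reduce the whole statement to the single assertion that $\mathrm{max}_{\OO}(\beta)$ is \emph{connected}; everything else (uniqueness of the closed orbit, its $(K^\beta)^o$-homogeneity) will then fall out of results already established. First I would set $F:=\mathrm{max}_{U^\C\cdot x}(\beta)$. By Corollary \ref{complex-parabolic}, $F$ is the unique closed $(U^\C)^{\beta+}$-orbit in $U^\C\cdot x$; since the unipotent radical $R((U^\C)^{\beta+})$ acts trivially on it, $F$ is in fact a compact $(U^\C)^\beta$-orbit, hence a flag manifold for the connected complex reductive group $(U^\C)^\beta$. By the Lemma immediately preceding the theorem we have $\mathrm{max}_{\OO}(\beta)=F\cap\OO$, and this set is nonempty.

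Next I would record the local structure of $\mathrm{max}_{\OO}(\beta)$ exactly as in the proof of Proposition \ref{Maxx}. Because $\OO$ is $G$-homogeneous, Proposition \ref{tangent} and Corollary \ref{MorseBott} give $T_x\,\mathrm{max}_{\OO}(\beta)=T_x\,G^\beta\cdot x$ at every $x$, while Lemma \ref{lemmm} shows $R^{\beta+}$ acts trivially on $\mathrm{max}_{\OO}(\beta)$. Hence each connected component of $\mathrm{max}_{\OO}(\beta)$ is a single $(G^\beta)^o$-orbit, and by Proposition \ref{heinzner-maximun} it is a compact $(K^\beta)^o$-orbit. In particular, since $\mathrm{max}_{\OO}(\beta)\subset F$, every connected component is a compact, hence closed, $(G^\beta)^o$-orbit sitting inside the flag manifold $F$.

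The heart of the argument, and the step I expect to be the main obstacle, is to show there is only one such component. Here I would apply Wolf's theorem in the form of Theorem \ref{Unique closed}, but to the smaller pair $(G^\beta)^o\subset(U^\C)^\beta$ acting on $F$. Since $\beta\in\liep$, the centralizer $(G^\beta)^o=(K^\beta)^o\exp(\liep^\beta)$ is a connected real form of $(U^\C)^\beta$, and $F$ is a compact $(U^\C)^\beta$-orbit, so $(G^\beta)^o$ has a \emph{unique} closed orbit in $F$. As every connected component of $\mathrm{max}_{\OO}(\beta)$ is a closed $(G^\beta)^o$-orbit in $F$, they must all coincide; therefore $\mathrm{max}_{\OO}(\beta)$ consists of a single component. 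Being the continuous image of the connected group $(G^\beta)^o$ it is connected, and by Proposition \ref{heinzner-maximun} it equals $(K^\beta)^o\cdot y$, a connected $(K^\beta)^o$-orbit. The delicate point to get right is the verification that $(G^\beta)^o$ genuinely is a (connected) real form of $(U^\C)^\beta$, so that Theorem \ref{Unique closed} legitimately applies at this smaller level; everything else is bookkeeping.

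Finally I would upgrade connectedness to the uniqueness statement for $G^{\beta+}$. Since $X=U^\C\cdot x$ is compact, any closed $G^{\beta+}$-orbit in $X$ is compact, so by the first main result (Theorem \ref{summing}) it lies in $\OO$ and is a finite union of connected components of $\mathrm{max}_{\OO}(\beta)$; as the latter is now known to be connected, every closed $G^{\beta+}$-orbit equals $\mathrm{max}_{\OO}(\beta)$. Conversely, because $R^{\beta+}$ acts trivially and $\mathrm{max}_{\OO}(\beta)$ is $G^\beta$-stable and connected, $G^{\beta+}\cdot y=G^\beta\cdot y=(G^\beta)^o\cdot y=\mathrm{max}_{\OO}(\beta)$, so $\mathrm{max}_{\OO}(\beta)$ is itself a closed $G^{\beta+}$-orbit. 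This establishes that $\mathrm{max}_{\OO}(\beta)$ is the unique closed $G^{\beta+}$-orbit in $\OO$, that it is connected, and that it is a $(K^\beta)^o$-orbit.
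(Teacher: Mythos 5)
Your proof is correct, and its overall skeleton coincides with the paper's: reduce everything to the connectedness of $\mathrm{max}_{\OO}(\beta)$, and obtain connectedness from a Wolf-type uniqueness theorem applied to a real form of the complex centralizer acting on the connected set $F=\mathrm{max}_{U^\C\cdot x}(\beta)$. The difference lies in how the Wolf step is implemented, and your version is a genuine streamlining. The paper descends to the semisimple part: it takes $(G^\beta)^o_{ss}$, notes it is a real form of $((U^\C)^\beta)_{ss}$, observes that $F$ --- being a flag manifold on which the center of $(U^\C)^\beta$ acts trivially --- is a compact $((U^\C)^\beta)_{ss}$-orbit, invokes Wolf's original theorem for that semisimple pair, and then must argue separately (triviality of the actions of $R^{\beta+}$ and of the center of $(G^\beta)^o$, containment in a closed $G^{\beta+}$-orbit, hence in $\OO$, hence in $\mathrm{max}_{\OO}(\beta)$ by Theorem \ref{summing}) before concluding that the components of $\mathrm{max}_{\OO}(\beta)$, each of which contains a closed $(G^\beta)^o_{ss}$-orbit by Proposition \ref{heinzner-maximun}, must collapse to one. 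You instead apply the paper's own reductive packaging of Wolf's theorem, Theorem \ref{Unique closed}, directly to the pair $(G^\beta)^o\subset (U^\C)^\beta$ acting on $F$; since the argument of Proposition \ref{Maxx} already shows each component of $\mathrm{max}_{\OO}(\beta)$ is a compact, hence closed, $(G^\beta)^o$-orbit inside $F$, uniqueness collapses them immediately, with no center or unipotent bookkeeping at this stage --- that bookkeeping was done once and for all inside the proof of Theorem \ref{Unique closed}. The price is exactly the point you flag as delicate: one must check the hypotheses of Theorem \ref{Unique closed} for the new pair, namely that $(U^\C)^\beta=(U^\beta)^\C$ with $U^\beta$ compact and connected (true: centralizers of tori in connected groups are connected) and that $(G^\beta)^o=(K^\beta)^o\exp(\liep^\beta)$ is a compatible connected real form, i.e. $(\lieg^\beta)^\C=(\lieu^\C)^\beta$, which holds because $\beta\in\liep\subset\lieg$ and $\lieg^\C=\lieu^\C$. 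Your closing step, upgrading connectedness to uniqueness of the closed $G^{\beta+}$-orbit via Theorem \ref{summing} together with uniqueness of the compact $G$-orbit $\OO$, is also spelled out more carefully than the paper's rather terse conclusion, and it in fact yields the slightly stronger statement (uniqueness in all of $U^\C\cdot x$, as in the introduction's formulation).
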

\begin{proof}
Let $(G^\beta)^o_{ss}$ denote the connected subgroup whose Lie algebra is $[\mathfrak g^{\beta},\mathfrak g^{\beta}]$. It is closed, semisimple and compatible \cite{knapp-beyond}. By Lemma \ref{lemmm} it preserves any connected components of $\mathrm{max}_{\OO}(\beta)$. By Proposition \ref{heinzner-maximun},  $(G^\beta)^o_{ss}$ has  a closed orbit on any connected component $\mathrm{max}_{\OO} (\beta)$. On the other hand, $\mathrm{max}_{U^\C \cdot x} (\beta)$ is connected and, by Proposition \ref{tangent}, is a closed orbit of $(U^\C)^{\beta}$. Note that $(G^\beta)^o_{ss}$ is a real form of $(U^{\beta})_{ss}$ and $\mathrm{max}_{U^\C \cdot x} (\beta)$, keeping in mind that it is a flag manifold and the center of $(U^\C )^\beta$ does not act on it, is a compact $(U^{\beta})_{ss}$ orbit. Applying a Theorem of Wolf \cite{Wolf} it follows that $(G^\beta)^o_{ss}$ has a unique closed orbit in $\mathrm{max}_{U^\C \cdot x} (\beta)$. Since both $R^{\beta+}$ and the center of $(G^{\beta})^o$ act trivially on  $\mathrm{max}_{U^\C \cdot x} (\beta)$, the  unique closed orbit of $(G^\beta)^o_{ss}$ is contained in a closed orbit of $G^{\beta+}$ and so it is contained in $\OO$. By Theorem \ref{summing}, this orbit is contained in $\mathrm{max}_{\OO}(\beta)$. Since $(G^\beta)^o_{ss}$ preserves $\mathrm{max}_{\OO} (\beta)$ and it has a closed orbit on any connected component of $\mathrm{max}_{\OO} (\beta)$ it follows that $\mathrm{max}_{\OO} (\beta)$ is connected. This means $\mathrm{max}_{\OO} (\beta)$ is the unique closed orbit $G^{\beta +}$. In particular, keeping in mind $\mathrm{max}_{\OO} (\beta)$ is $(G^\beta)^o$ homogeneous, applying Proposition \ref{heinzner-maximun} we get $\mathrm{max}_{\OO} (\beta)$ is a $(K^\beta)^o$ orbit concluding the proof.
\end{proof}

\end{document}